\pgfplotsset{
plotl/.style={blue,no marks,domain=-1:15,samples=50},
plotd/.style={red,no marks,ultra thick,domain=-2:2,samples=50}
}
\renewcommand{\Re}{{\rm{Re}}}
\newcommand{\xb}{\mathbf{x}}
\newcommand{\ub}{\mathbf{u}}
\newcommand{\vb}{\mathbf{v}}
\newcommand{\Vb}{\mathbf{V}}
\newcommand{\ri}{\mathrm{i}}
\newcommand{\rI}{\mathrm{I}}
\newcommand{\CQ}{\mathcal{Q}}
\newcommand{\CTQ}{\widetilde{\mathcal{Q}}}
\newcommand{\CG}{\mathcal{G}}
\newcommand{\CTG}{\widetilde{\mathcal{G}}}
\newcommand{\im}{\operatorname{Im}}
\newcommand{\tsig}{\widetilde{\sigma}}
\newcommand{\norm}[1]{\left\lVert#1\right\rVert}
\newcommand{\T}{\mathrm{T}}
\numberwithin{equation}{section}
\newtheorem{thm}{Theorem}[section]
\newtheorem{exam}[thm]{Example}
\newtheorem{rem}[thm]{Remark}
\newtheorem{defn}[thm]{Definition}
\newtheorem{lem}[thm]{Lemma}
\newtheorem{assu}[thm]{Assumption}
\newtheorem{prop}[thm]{Proposition}
\newtheorem{con}[thm]{Condition}
\begin{document}


%
\catchline{}{}{}{}{}
%

\title{Spiked sample covariance matrices with possibly multiple bulk components 
}

\author{Xiucai Ding }

\address{Department of Mathematics, Duke University\\
Durham, NC 27710,  USA\\ 
\email{xiucai.ding@duke.edu} }

%
%
\maketitle
%

\begin{abstract}
In this paper, we study the convergent limits and rates of the eigenvalues and eigenvectors for  spiked sample covariance matrices whose  spectrum  can have multiple bulk components. Our model is an extension of Johnstone's spiked covariance matrix model.  Based on our results, we can extend many statistical applications  based on Johnstone's spiked covariance matrix model.  
\end{abstract}

\keywords{Random matrices; Covariance matrices with multiple bulk components; Anisotropic MP law.}

\ccode{Mathematics Subject Classification 2000: 15B52, 60B20}

\section{Introduction}
Sample covariance matrices play important roles in high dimensional data analysis, which find applications in many scientific endeavors. In the high dimensional regime, when the dimension is comparable to the sample size, the most popular and commonly used model is the spiked covariance matrix model proposed by Johnstone in \cite{IJ}.  Consider a sequence of $p$-dimensional i.i.d.  observations $\{\mathbf{x}_i\}_{i=1}^n$ satisfying $\mathbb{E} \xb_i=\mathbf{0}$ and $\text{Cov}(\xb_i)=\Sigma$ with the structure 
\begin{equation}\label{eq_sigmaoriginal}
\Sigma=\text{diag}\{\sigma_1, \cdots, \sigma_r, 1,\cdots,1\}, \ r>0 \ \text{is a fixed constant},
\end{equation} 
researchers are interested in extracting information about $\{\sigma_i\}_{i=1}^r$ using the sample covariance matrix $n^{-1} XX^{\mathrm{T}}, \ X=(\xb_i)_{i=1}^n.$

This model has been studied in the past decade in various papers, for instance see \cite{BBP,BS, BY, BKYY, DP}. For an orientation for such results, we refer to the recent review paper \cite{IP}.  Since the seminal work of \cite{BBP}, it is now understood that when $\sigma_i, i=1,2,\cdots,r $ are bounded and  above some critical values, the corresponding sample  eigenvalues of $n^{-1}XX^\T$ will converge to some deterministic values depending only on  $\{\sigma_i\}_{i=1}^r$ and the ratio $p/n.$ In the present paper, we {refer to} $\{\sigma_i\}_{i=1}^r$ as the {\em spiked population eigenvalues} and the rest as {\em bulk population eigenvalues}.  

One limitation of the assumption (\ref{eq_sigmaoriginal}) is that all the bulk population eigenvalues should be equal {unity}.  This is not realistic in many of the statistical applications. For instance, the samples $\{\xb_i\}_{i=1}^n$ may be a time series dataset and the entries of each sample $\xb_i$ undergo an AR process \cite{Yao}. This will make $\Sigma$ a Toeplitz matrix. {Furthermore, in the literature of signal processing \cite{YKN, YCND},  the population covariance matrix $\Sigma$ may have a
(known) finite number of distinct eigenvalues, and each of them with an unknown multiplicity. The multiplicity is also comparable to the sample size. In this case, there will be multiple clusters of eigenvalues, for instance see Figure 1 and Assumption 2.2 of \cite{YKN}.}
 In these situations, it will be more realistic to assume that the bulk eigenvalues of $\Sigma$ have a few clusters. In the present paper, we shall call such clusters as {\em bulk components} \cite{KY1}.

  Motivated by such applications, in the present paper, we generalize the spiked covariance matrix model (\ref{eq_sigmaoriginal}) by allowing the bulk eigenvalues to have a general density function and possibly multiple bulk components. In Figure \ref{figure_11}, we present such an example. 
  
  \begin{figure}[htb]
\centering
\includegraphics[height=5cm,width=10cm]{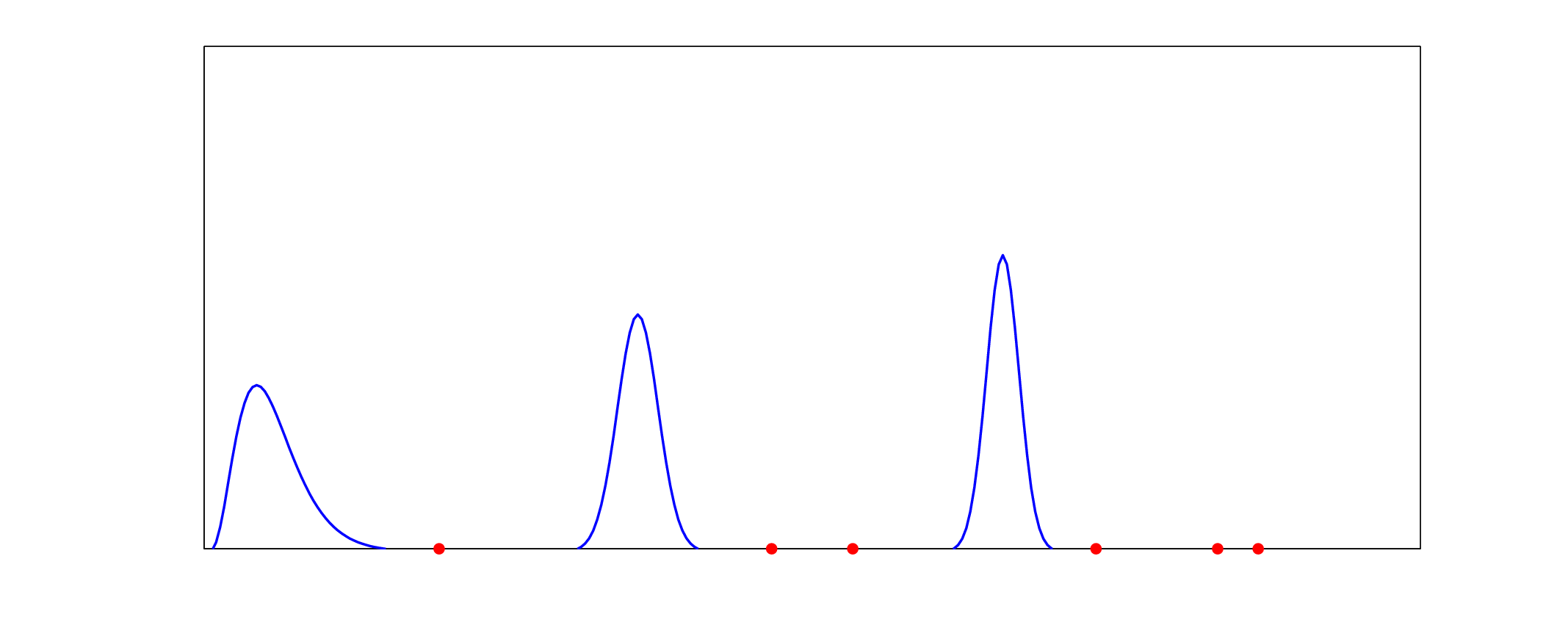}
\vspace*{-6mm}
\caption{The spectrum of the population covariance matrix contains three bulk components and there are three, two and one spike associated with the first, second and third bulk component respectively.} 
\label{figure_11}
\end{figure}


We mention that the non-spiked sample covariance matrices with possibly multiple bulk components have been studied in \cite{BPZ, XCD2, DY,  NKTW, HHN, KY1, LS}.  The null case is when the entries of the data matrix are i.i.d. {(i.e. $\Sigma=I$)}, where the {limiting} spectrum of the sample covariance matrices satisfies the celebrated Marchenko-Pastur (MP) law \cite{MP}. {Moreover, the local laws have been established in \cite{BEKYY,PY}.} {For general positive definite matrix $\Sigma,$ it is shown that the limiting spectrum of $n^{-1}XX^\T$ satisfies the deformed MP law \cite{SC}.  Recently, the local laws have been established in \cite{KY1}. } In this paper, we will extend the model from \cite{KY1} by adding a few spikes.  {It is notable that this idea has been proposed in \cite[Remark 3.8]{KY1} and we essentially implement the analysis here.} 

We point out that the spiked sample covariance matrix falls into the class of \emph{deformed random matrices,} which also include deformed Wigner matrix \cite{KY2, LS1, LS2} and deformed rectangular matrix \cite{BGGM, XCD1} as examples.
In the present paper, we study the convergent limits and rates of the eigenvalues {and} eigenvectors for a  new spiked covariance matrix model by allowing multiple bulk components (c.f. see the model definition in Section \ref{sec:model}). { 
In \cite{BKYY}, the authors have established analogous results when the bulk eigenvalues are equal unity. We basically extend the results of \cite{BKYY} to the case when $\Sigma$ is a general positive definite matrix satisfying the conditions of \cite{KY1}.  Our analysis relies on the methods and results of \cite{BKYY,KY1}. }

 Based on our results, we extend some statistical applications based on model (\ref{eq_sigmaoriginal}). We address two fundamental issues: the estimation of the numbers of bulk components and spikes and optimal nonlinear shrinkage of the eigenvalues. We believe that some other applications can be considered using our framework.

{Before concluding this section, we summarize the main contributions of our work: 

(i). We introduce a general spiked  covariance matrix model (c.f. (\ref{sigma_mostgene1})). This new model includes Jonstone's spiked covariance matrix model as a special example by allowing more general structure of the bulk population eigenvalues. Especially, the spectrum of our new model may have multiple bulk components with several spikes (c.f. Figure \ref{figure_11}).  This allows us to study more general spiked sample covariance matrices, for instance the spiked Toeplitz matrix in Example \ref{exam_top}. \\

(ii). For both supercritical (i.e. spikes have $O(1)$ separation from the bulk components)  and subcritical spikes (i.e. spikes have $o(1)$ separation from the bulk components), we obtain the first order limits of the corresponding
outliers and  the associated eigenvectors. Moreover, our results provide
a precise rate of convergence, which we believe to be optimal up to some $n^{\epsilon}$ factor {in the sense of stochastic domination (c.f. Definition \ref{defn_stochastic})}. \\

(iii). We prove large deviation bounds for the extremal non-outlier eigenvalues and eigenvectors. In particular, we prove that the extremal non-outlier eigenvalues will stick to the right-most edges of the associated bulk components. { Moreover, we provide the convergent rates of the extremal non-outlier eigenvectors near the edges of the bulk components. It turns out that the rates depend on the separation between the spikes and the bulk components. } \\

(iv). We extend some statistical applications based on Johnstone's spiked covariance matrix model. Especially, when the model has multiple bulk components, we provide an eigen-difference based statistic to estimate the number of bulk components and spikes. \\
}

%
%
%

Finally, to have a complete description of the principal components, we still need to consider the second order asymptotics, i.e. the limiting distribution of the outlier eigenvalues and eigenvectors. In the recent work \cite{BDWW}, the authors have obtained the joint distribution of the outlier eigenvalues and eigenvectors under Johnstone's spiked covariance matrix model.  We will generalize such results for our new model in the future work.

The present paper is organized as follows. In Section \ref{sec:model and mp}, we provide the notations, assumptions and definitions of the model. In Section \ref{sec:resultsandexamples}, we state our main results and provide some examples for explanation.  In Section
\ref{section_exapp}, we discuss some statistical applications. In Section \ref{section_tools}, we list the basic tools for our proofs. Finally, Sections \ref{section_value} and \ref{section_vector} are devoted to proving the results of the eigenvalues and eigenvectors respectively. 

%
%
%
%
%
%

\section{Deformed Marchenko-Pastur law and definition of the model} \label{sec:model and mp}
In this section, we explain the basic structure of the asymptotic eigenvalue density, define our model and list our key assumptions. We first introduce some notations. For a probability measure $\mu,$ we denote its \emph{Stieltjes transform} as 
\begin{equation}\label{eq_defnstiel}
m_{\mu}(z):=\int \frac{1}{x-z} \mu(dx), \ z \in \mathbb{C}^+,
\end{equation}
where $\mathbb{C}^+$ is the complex upper-half plane. For any $n \times n$ Hermitian matrix $H,$ the empirical spectral distribution (ESD) of $H$ is defined as 
\begin{equation*}
F_H^n(\lambda):=\frac{1}{n} \sum_{i=1}^n  \mathbf{1}_{\{\lambda_i(H) \leq \lambda\}}.
\end{equation*}
It is easy to see that the Stieltjes transform of the ESD of $H$ is given by 
\begin{equation*}
m_{H}(z)=\frac{1}{n} \operatorname{Tr} \mathcal{G}_H(z), \ z \in \mathbb{C}^+,
\end{equation*}
where $\mathcal{G}_H(z):=(H-z)^{-1}$ is the \emph{Green function} of $H.$

\subsection{Deformed Marchenko-Pastur law} We first consider the $p \times p$ matrix $\mathcal{Q}_1:=\Sigma^{1/2} XX^\T \Sigma^{1/2},$ where $\Sigma$ is a $p \times p$ positive definite deterministic matrix and $X$ is a random $p \times n$ matrix. {We} denote the dimensional ratio $c=\frac{n}{p}$ and assume that there exists a small constant $0<\tau \leq 1$ such that 
\begin{equation}\label{eq_regemic}
\tau \leq c \leq \tau^{-1}. 
\end{equation} 
We further suppose that the entries of $X=(x_{ij})$ are independent random variables such that 
\begin{equation} \label{eq_momentx12}
\mathbb{E} x_{ij}=0, \ \mathbb{E} x_{ij}^2=\frac{1}{n}. 
\end{equation}
In addition, we assume that there exists some large constant $C>0$ and for all $k \leq C,$ there exists some $C_k>0$ such that 
\begin{equation}\label{eq_mimentxhigh}
\mathbb{E}|\sqrt{n} x_{ij}|^k \leq C_k.
\end{equation}

Denote the eigenvalues of $\Sigma$ by
$\sigma_1 \geq \sigma_2 \geq \cdots \geq \sigma_p  >0,$ and the empirical spectral distribution (ESD) of $\Sigma$ by
\begin{equation} \label{def_pi}
\pi:=\frac{1}{p} \sum_{i=1}^p  \delta_{\sigma_i},
\end{equation}
where $\delta_{\cdot}$ is the Dirac Delta measure. We further assume that 
\begin{equation} \label{assm_11}
\sigma_1 \leq  \tau^{-1}, \  \pi([0,\tau]) \leq 1-\tau. 
\end{equation}

We know from \cite{KY1, MP, Silver} that the ESD of $\mathcal{Q}_1$ converges to a deterministic limit, which we shall call \emph{the deformed MP law.}  
The  deformed MP law can be best formulated using the Stieltjes transform. Next, we  follow \cite[Section 2.1]{KY1} to state the preliminary results of the deformed MP law.
We conclude from \cite[Lemma 2.2]{KY1} that if $\pi$ is compactly supported on $\mathbb{R},$ then for each $z \in \mathbb{C}^+,$ there exists a unique solution $m \equiv m(z) \in \mathbb{C}^+$ satisfying

\begin{equation} \label{selfconsistenteuqation1}
\frac{1}{m}=-z+\frac{1}{c} \int \frac{x}{1+m x}\pi (dx),
\end{equation} 
where $c=n/p$ and $\pi$ is defined in (\ref{def_pi}). 
{{Recall} the definition of \emph{asymptotic density}, which is introduced in \cite[Definition 2.3]{KY1}.
\begin{defn}[Asymptotic density] We define the deterministic function $m \equiv m_{\Sigma,n}: \mathbb{C}^+ \rightarrow \mathbb{C}^+$ as the unique solution of (\ref{selfconsistenteuqation1}) with $c=n/p$ and $\pi$ defined in (\ref{def_pi}). We denote by $\rho \equiv \rho_{\Sigma, n}$ the probability measure associated with $m,$ (i.e. $m$ is the Stieltjes transform of $\rho$), and call it the asymptotic eigenvalue density. 
\end{defn}
}

{We remark that the asymptotic density $\rho$ is indeed a probability measure and can possibly depend on $n$. } It is clear that both $m$ and $\rho$ are well-defined in our setting {since $\pi$ is discrete}. Moreover,  the behaviour of $\rho$ can be entirely understood by the analysis of the following function $f${
\begin{equation}\label{defnf}
z=f(m), \ \operatorname{Im} m > 0, \ \ \text{where}  \ f(x):=-\frac{1}{x}+\frac{1}{c} \sum_{i=1}^p \frac{p^{-1}}{x+\sigma_i^{-1}}.
\end{equation} 
}
We point out that 
\begin{equation} \label{key_quantity}
f(m(z))=m(f(z))=z.
\end{equation}
We refer the readers to  \cite[Section 5]{SC} and \cite{HHN,KY1} for more details. 
 We next summarize the properties of $f$ defined in (\ref{defnf}), it can be found in \cite[Lemmas 2.4, 2.5 and 2.6]{KY1}.  With the following properties, we can understand the behavior of $\rho.$ 
\begin{lem} \label{propertiesofrho} Denote $\overline{\mathbb{R}}=\mathbb{R} \cup \{\infty \}$, then $f$ defined in (\ref{defnf}) is smooth on the $p+1$ open intervals of  $ \overline{\mathbb{R}}$ defined through
\begin{equation*}\label{defn_I}
I_1:=(-\sigma_1^{-1},0), \ I_i:=(-\sigma_i^{-1}, -\sigma_{i-1}^{-1}), \ i=2,\cdots, p, \ I_0:= \overline{R} \backslash \cup_{i=1}^p \bar{I}_i.
\end{equation*}
We also introduce a multiset $\mathcal{C} \subset \overline{\mathbb{R}}$ containing the critical points of $f$, using the conventions that a nondegenerate critical point is counted once and a degenerate critical point will be counted twice. In the case $c=1,$  $\infty$ is a nondegenerate critical point. With the above notations, we have
\begin{itemize}
\item $\vert \mathcal{C} \cap I_0 \vert=\vert \mathcal{C} \cap I_1 \vert=1$ and $\vert \mathcal{C} \cap I_i \vert \in \{0,2\}$ for $i=2, \cdots, p.$ Therefore, $\vert \mathcal{C} \vert=2q$ is even, where for convenience, we denote by $x_1 \geq x_2 \geq \cdots \geq x_{2q-1}$ the $2q-1$ critical points in $I_1 \cup \cdots \cup I_p$ and {by} $x_{2q}$ the unique critical point in $I_0$.
\item Denote $a_k:=f(x_k) $, we have $a_1 \geq \cdots \geq a_{2q}.$ Moreover, we have $x_k=m(a_k)$ by assuming $m(0):= \infty$ for $c=1$. Furthermore, for $k=1,\cdots,2q,$ there exists a constant $C$ such that $0\leq a_{k} \leq C$. 
\item $\operatorname{supp} \ \rho \cap (0, \infty)=(\cup_{k=1}^q[a_{2k}, a_{2k-1}])\cap (0, \infty)$. 
\item  For some small constant  $\nu>0,$ when $x \in [x_{2k-1}-\nu, x_{2k-1}+\nu],\ {k=1,2,\cdots,q,}$
\begin{equation}\label{fderivative}
f^{\prime}(x)=O(|x-x_{2k-1}|), \ f(x)-a_{2k-1}=O(|x-x_{2k-1}|^2).
\end{equation}
\end{itemize}
\end{lem}
The above lemma shows that the asymptotic density $\rho$ can have $q \geq 1$ different bulk components with edges $\{a_k\}_{k=1}^{2q}.$ {For the reader's convenience,  we replicate Figure 2.1 of \cite{KY1} to illustrate the properties of the function $f$ and the density $\rho.$  }
%

  \begin{figure}[htb]
\centering\includegraphics[height=6cm,width=13cm]{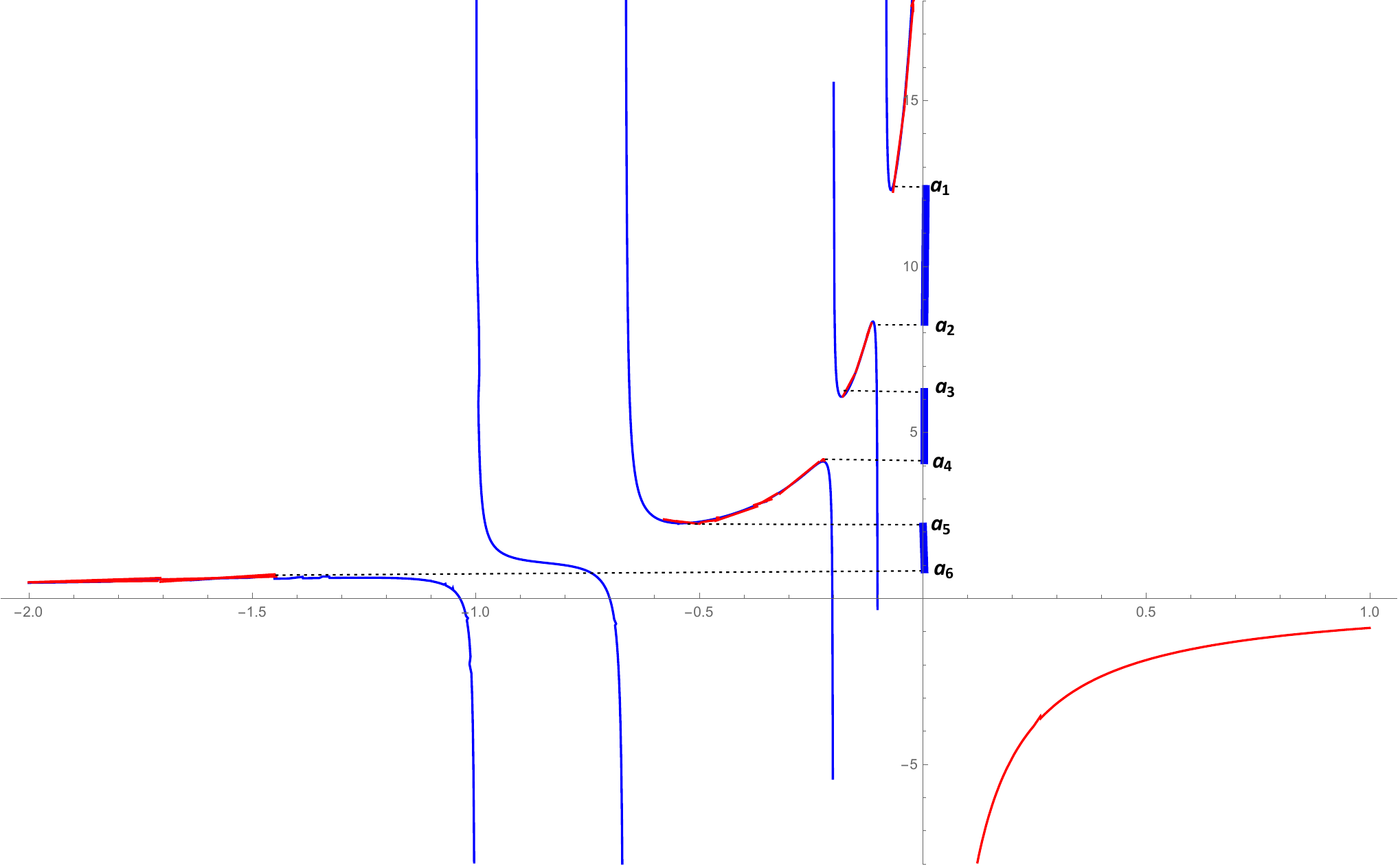}
\vspace*{-3mm}
\caption{The function $f(x)$ for $c^{-1} \pi=0.01 \delta_{10}+0.01\delta_5+0.05 \delta_{1.5}+0.03 \delta_1$. Here $c=10.$  It is clear that we have $q=3$ connected components. The support of $\rho$ is indicated with thick blue lines on the vertical axis. The inverse of $m|_{\mathbb{R} \backslash \operatorname{supp} \rho}$ is drawn in red.} 
\label{figure_1}
\end{figure}


\subsection{Definition of the model and assumptions}\label{sec:model} This subsection is devoted to defining our model and providing the necessary assumptions.  We first impose some  regularity conditions on $\Sigma,$ which are proposed in \cite[Definition 2.7]{KY1}. Roughly speaking, the regularity condition rules out the spikes from the spectrum of $\mathcal{Q}_1.$ 

\begin{assu} \label{assu_regularity} Fix  $\tau>0$,  we assume that \\
(i). The edges $a_k,\ k=1,\cdots, 2q$ are regular in the sense that 
\begin{equation} \label{defnregularityequation1}
a_k \geq \tau, \ \min_{l \neq k} \vert a_k-a_l \vert \geq \tau, \ \min_{i} \vert x_k+\sigma_{i}^{-1} \vert \geq \tau.
\end{equation}
(ii). The bulk components $k=1,\cdots,q$ are regular in the sense that for any fixed $\tau^{\prime}>0$ there exists a constant $\nu \equiv \nu_{\tau,\tau^{\prime}}$ such that the density of $\rho$ in $[a_{2k}+\tau^{\prime}, a_{2k-1}-\tau^{\prime}]$ is bounded from below by $\nu$.
\end{assu}

The second condition in (\ref{defnregularityequation1}) states that the gap in the spectrum of $\rho$ adjacent to $a_k$ can be well separated when $n$ is sufficiently large. The third condition ensures a square root behaviour of $\rho$ in a small neighborhood of $a_k$. 
The bulk regularity condition (ii) imposes a lower bound on the density of eigenvalues away from the edges such that we can study different bulk components separately.

We now propose our new model by adding a few spikes on each of the bulk components. Denote the spectral decomposition of $\Sigma$ as
\begin{equation*}
\Sigma=\sum_{i=1}^p \sigma_i \mathbf{v}_i \mathbf{v}_i^\T, \ \sigma_1 \geq \sigma_2 \geq \cdots \geq \sigma_p>0.
\end{equation*} 
Since there exist $q \geq 1$ bulk components,  we order them in the decreasing order according to the values of their right-most edges. We first relabel the eigenvalues $\{\sigma_i\}_{i=1}^p$ for each bulk component.  To the end, for $k=1, 2, \cdots, q,$ we define the \emph{classical number of eigenvalues in the $k$th bulk component} through  (for instance, see \cite[Lemma A.1]{KY1})
{
\begin{equation*}
n_k=\sum_{i=1}^p \mathbf{1}(x_{2k} \leq -\sigma_i^{-1} \leq x_{2k-1} ), \  k=1,2,\cdots, q.
\end{equation*}
}
It is easy to see that $\sum_{k=1}^q n_k=p. $
With the above notations, for $k=1,2,\cdots,q$ and $i=1,2,\cdots, n_k,$ we relabel the eigenvalues for $\Sigma$ by denoting {
\begin{equation} \label{defn_relabel1}
\sigma_{k,i}:=\sigma_{i+\sum_{l=1}^{k-1} n_l}. 
\end{equation}
}
Similarly, we can relabel the eigenvectors $\mathbf{v}_i$ as $\mathbf{v}_{k,i}.$ We illustrate the relabellings (\ref{defn_relabel1}) in Figure \ref{fig_relabel}. 

\begin{figure}[h]
\begin{tikzpicture}
\tikzset{box/.style={arrow box, draw=black}};
\node[box, arrow box arrows={south:1cm}] at (0,0){$\sigma_1 \geq \sigma_2 \geq \cdots \geq \sigma_{n_1}$};
\node at (0,-1.7){\fbox{$\sigma_{1,1} \geq \sigma_{1,2} \geq \cdots \geq \sigma_{1,n_1}$}} ;
\node at (2.1,0){$ \geq $};
\node at (2.4,-1.7){$ \geq $};
\node[box, arrow box arrows={south:1cm}] at (5,0){$\sigma_{n_1+1} \geq \sigma_{n_1+2} \geq \cdots \geq \sigma_{n_1+n_2}$};
\node at (5,-1.7){\fbox{$\sigma_{2,1} \geq \sigma_{2,2} \geq \cdots \geq \sigma_{2, n_2}$}};
\node at (8,0){$ \geq $};
\node at (8.7,0){$ \cdots \cdots$};
\node at (7.7,-1.7){$ \geq $};
\node at (8.4,-1.7){$ \cdots \cdots $};
\end{tikzpicture}
\caption{Relabellings of the eigenvalues of $\Sigma.$}\label{fig_relabel}
\end{figure}

Armed with the above preparation, we now construct our new model.  Given some fixed constant $r>0,$ we add $r $ spikes to the spectrum of $\Sigma$   and  suppose that there are $r_k, \ k=1,2,\cdots, q,$ spikes associated with the $k$th bulk component such that $\sum_{k=1}^q r_k =r. $  Let $\mathcal{I}$ be the index set containing the indices of the spikes, where
{
\begin{equation}\label{eq_index_location}
\mathcal{I}:=\{(k,i) \vert 1 \leq k \leq q, \ 1 \leq i \leq r_k\}.
\end{equation}
}
Then the new spiked covariance matrix model is defined as  
\begin{equation}\label{cov_gene}
\widetilde{\mathcal{Q}}_1:=\widetilde{\Sigma}^{1/2}XX^\T \widetilde{\Sigma}^{1/2},
\end{equation}
 where $\widetilde{\Sigma}$ is denoted by
 {
\begin{equation} \label{sigma_mostgene1}
\widetilde{\Sigma}=\sum_{i=1}^p \tilde{\sigma}_i \mathbf{v}_i \mathbf{v}_i^\T=\sum_{k=1}^q \sum_{i=1}^{n_k} \widetilde{\sigma}_{k,i} \mathbf{v}_{k,i} \mathbf{v}_{k,i}^\T, \ \text{where} \ \widetilde{\sigma}_{k,i}=\sigma_{k,i} , \ (k,i) \notin \mathcal{I}.  
\end{equation}
{Here we assume that $-\widetilde{\sigma}_{k,i}^{-1}<x_{2(k-1)}$ for $(k,i) \in \mathcal{I}.$} 
}



%



 Next, we introduce the set $\mathcal{O}$ through the relabellings (\ref{defn_relabel1}) with $\mathcal{O}=\bigcup_{k=1}^q \mathcal{O}_k,$ where {$\mathcal{O}_k, k=1,2,\cdots, q$ are disjoint index sets and $\mathcal{O}_k$} is defined as   
{
\begin{equation}\label{defn_oplus}
\mathcal{O}_k=\{i: x_{2k-1} + n^{-1/3}\leq-\tilde{\sigma}_{k,i}^{-1}<x_{2(k-1)}\}, \ k=1,2, \cdots, q,
\end{equation}
} 
where we use the convention that $x_0=+\infty.$ {Since $x_1 \geq x_2 \geq \cdots \geq x_{2q},$ by Lemma \ref{propertiesofrho}, we find that $\widetilde{\sigma}_{k,i}< \widetilde{\sigma}_{l,j}$ for $l<k$ and $ 1\leq i \leq n_k, 1 \leq j \leq n_l.$} 
For $k=1,2,\cdots,q,$ we denote $r_k^+=|\mathcal{O}_k|, k=1,2,\cdots,q.$ We will see later that each $(k,i) \in \mathcal{O}$ gives rise to an outlier eigenvalue near some location depending on $\Sigma, \widetilde{\sigma}_{k,i}$  and the ratio $c.$ We define the set $\mathcal{N}=\sum_{k=1}^q \mathcal{N}_k,$ where $\mathcal{N}_k$ contains the indices of $\widetilde{\sigma}_{k,i},$ which do not satisfy the condition in (\ref{defn_oplus}), i.e., for $(k,i) \in \mathcal{N}_k,$ 
\begin{equation*}
-\widetilde{\sigma}_{k,i}^{-1}< x_{2k-1}+n^{-1/3}, \ (k,i) \in \mathcal{I}.
\end{equation*}

To avoid repetition, we summarize the basic assumptions for future reference.
\begin{assu}\label{assum_main}  We assume that (\ref{eq_regemic}), (\ref{eq_momentx12}), (\ref{eq_mimentxhigh}), (\ref{assm_11}), (\ref{sigma_mostgene1}) and Assumption \ref{assu_regularity}  hold true. 
\end{assu}

\section{Main results and examples}\label{sec:resultsandexamples} \subsection{Main results} 
In this section, we state our main results. We first introduce the following definition. It is first introduced in \cite{BEKYY} and makes precise statement of the form " $\mathsf{X}$ is bounded with high probability by $\mathsf{Y}$
up to small powers of $n$". 

\begin{defn} [Stochastic domination]\label{defn_stochastic} Let
 \begin{equation*}
 \mathsf{X}=(\mathsf{X}^{(n)}(u):  n \in \mathbb{N}, \ u \in \mathsf{U}^{(n)}), \   \mathsf{Y}=(\mathsf{Y}^{(n)}(u):  n \in \mathbb{N}, \ u \in \mathsf{U}^{(n)}),
 \end{equation*}
be two families of nonnegative random variables, where $\mathsf{U}^{(n)}$ is a possibly $n$-dependent parameter set. We say that $\mathsf{X}$ is stochastically dominated by $\mathsf{Y},$ uniformly in $u,$ if for all small {$\epsilon>0$} and {large $ \varphi>0,$} we have 
\begin{equation*}
\sup_{u \in \mathsf{U}^{(n)}} \mathbb{P} \Big( \mathsf{X}^{(n)}(u)>n^{\epsilon}\mathsf{Y}^{(n)}(u) \Big) \leq n^{- \varphi},
\end{equation*}   
for large enough $n \geq  n_0(\epsilon, \varphi).$ In addition,  we use the notation $\mathsf{X}=O_{\prec}(\mathsf{Y})$ if $|\mathsf{X}|$ is stochastically dominated by $\mathsf{Y},$ uniformly in $u.$ Throughout this paper, the stochastic domination will always be uniform in
all parameters (mostly are matrix indices and the spectral parameter $z$) that are not explicitly fixed.  

Moreover, for any $n$-dependent event $\Xi,$ we say it is a high-probability event if $1-\mathbf{1}(\Xi) \prec 0.$
 \end{defn}

We now introduce some notations. Let $\mu_1 \geq \mu_2 \geq \cdots \geq \mu_{p \wedge n}$ be nontrivial eigenvalues of $\widetilde{\mathcal{Q}}_1$ and $\{\mathbf{u}_i\}$ be the associated eigenvectors. We relabel the eigenvalues and eigenvectors in the same way as (\ref{defn_relabel1}).  We first introduce the results for the eigenvalues. Recall $f$ in (\ref{defnf}). 

%
%
%
%
%
%
%
%
%
\begin{thm}\label{thm_evout}  Suppose Assumption \ref{assum_main} holds. For $k=1,2,\cdots,q$ and $i \in \mathcal{O}_k$ defined in (\ref{defn_oplus}),  we have  
\begin{equation}\label{outlier_out}
| \mu_{{k,i}}-f(-\widetilde{\sigma}_{k,i}^{-1}) | \prec n^{-1/2}(-\widetilde{\sigma}_{k,i}^{-1}-x_{2k-1})^{1/2}.
\end{equation}
Moreover, for any fixed large constant  $\varpi>0$ and  $k=1,2,\cdots, q, \ r^+_k+1 \leq i \leq \varpi,$  we have 
\begin{equation}\label{outlier_nonout}
|\mu_{{k,i}}-f(x_{2k-1})| \prec n^{-2/3}.
\end{equation}
\end{thm}
{
Theorem \ref{thm_evout} is an analogous result of \cite[Theorem 2.3]{BKYY}. The above theorem gives precise locations and rates of the outlier and the extremal non-outlier eigenvalues. For the outlier eigenvalues, they will locate around their \emph{classical locations} $f(-\widetilde{\sigma}_{k,i}^{-1})$ and for the extremal non-outlier eigenvalues, they will locate around the right-most edge of the bulk component. Moreover, the fluctuation of the outlier changes
from the order $n^{-1/2}(-\widetilde{\sigma}_{k,i}^{-1}-x_{2k-1})^{1/2}$ to $n^{-2/3}$ when $(-\widetilde{\sigma}_{k,i}^{-1}-x_{2k-1})$ crosses the scale { $n^{-1/3}.$ }  }

Next, we state the results of the outlier eigenvectors.  For $k=1,2,\cdots,q$ and $i \in \mathcal{O}_k$, denote {
\begin{equation}\label{defn_converlimit}
a_{k,i}:=\tilde{\sigma}_{k,i}^{-1} \frac{f^{\prime}(-\tilde{\sigma}_{k,i}^{-1})}{f(-\tilde{\sigma}_{k,i}^{-1})}.
\end{equation}
}
Further, we denote  
\begin{equation} \label{defn_nuoij}
\nu_{ij}^k:=
\begin{cases}
\min_{l \neq i}|-\tsig_{k,l}^{-1}+\tsig_{k,j}^{-1}|, & \ \text{if} \ i=j ; \\
|-\tsig_{k,i}^{-1}+\tsig_{k,j}^{-1}|, & \ \text{if} \ i \neq j. 
\end{cases} 
\end{equation}

\begin{thm}\label{thm_eveout} Suppose Assumption \ref{assum_main} holds.  For $k=1,2,\cdots,q, i,j \in \mathcal{O}_{k}, $ we have that 
\begin{equation}\label{eveout_eq}
\left| \langle \mathbf{u}_{{k, i}}, \mathbf{v}_{k,j} \rangle^2-\delta_{ij} a_{k,i} \right| \prec  \delta_{ij} n^{-1/2}(-\tsig_{k,i}^{-1}-x_{2k-1})^{-1/2} +n^{-1} (\nu_{ij}^k)^{-2}.
\end{equation}
Further, for $1 \leq k_1 \neq k_2 \leq q, i \in \mathcal{O}_{k_1}, j \in \mathcal{O}_{k_2},$ we have 
\begin{equation}\label{eveout_non}
 \langle \mathbf{u}_{{k_1, i}}, \mathbf{v}_{k_2,j} \rangle^2  \prec  n^{-1}.
\end{equation}

%
\end{thm}

Then we state the results of non-outlier eigenvectors. Denote 
\begin{equation*}
\theta_{k,i}:=n^{-2/3}(i \wedge (n_k+1-i))^{2/3}.
\end{equation*}

\begin{thm}\label{thm_evebulk} Fix $\tau>0.$ Suppose Assumption \ref{assum_main} holds. For $k=1,2,\cdots, q $ and $r_k^++1 \leq i \leq (1-\tau) n_k,$ 
we have that  
%
\begin{equation}\label{delocalequation}
\langle \mathbf{u}_{k,i}, \vb_{j} \rangle^2 \prec
n^{-1} ( \theta_{k,i}+(-\tsig_{j}^{-1}-x_{2k-1})^2)^{-1}, \ 1 \leq j \leq p.
\end{equation}
\end{thm}
{
 Theorems \ref{thm_eveout} and \ref{thm_evebulk}  characterize the asymptotic behavior of sample eigenvectors, whose analogous results are \cite[Theorems 2.11, 2.16 and 2.17]{BKYY}. } For the $(k,i)$th outlier eigenvectors, it will be concentrated on a cone with axis parallel to $\vb_{k,i}$ and the aperture is determined by $a_{k,i}.$ For the non-spiked eigenvector, they will be delocalized according to (\ref{delocalequation}). Moreover, {we conclude from (\ref{delocalequation}) that  the convergent rates of the extremal non-outlier eigenvectors near the edges of the bulk components depend on the separation between the spikes and the bulk components.}

{Before concluding this section, we point out how our model and results differ from some existing works. In \cite{BY}, the authors studied both the first and second order asymptotics of the outlier eigenvalues under the assumption that $\widetilde{\Sigma}$ has a block structure and only one bulk component. They also need stronger assumption that $-\widetilde{\sigma}_{i}^{-1}$ is far away from $x_1$ by a distance of order one (i.e. supercritical condition).   In  \cite{BGN}, borrowing the techiniques from free probability theory, the authors established the convergent limits for the eigenvalues and eigenvectors under the  supercritical condition. In \cite{DP}, the author obtained both the first and second order asymptotics for the eigenvalues and eigenvectors when $\Sigma=I$ under the supercritical condition. Finally, in \cite{BKYY}, the authors established both the first and second order asymptotics without the supercritical condition when $\Sigma=I$ and they also allow $\widetilde{\sigma}_i$ diverge with $n.$  }
\subsection{Examples} We consider a few examples to explain our results in details. We first provide two types of conditions on $\Sigma$ satisfying  Assumption \ref{assu_regularity}. They can be found in \cite[Examples 2.8 and 2.9]{KY1}.
{
\begin{con}\label{condition1}  Suppose that $l$ is fixed and there are $l$ distinct eigenvalues $\sigma_1, \cdots, \sigma_l$. We further assume that $\sigma_1, \cdots, \sigma_l$ and $c^{-1} \pi(\{\sigma_1\}), \cdots, c^{-1} \pi(\{\sigma_l\})$ all converge in $(0, \infty)$ as $n \rightarrow \infty$. We also assume that the critical points of  $\lim_{n} f$ are non-degenerate, and $\lim_{n} a_i > \lim_{n}a_{i+1}$ for $i=1,2,\cdots, 2q-1$.  
\end{con}
}
\begin{con}\label{condition_2} We suppose that $c \neq 1$ and $\pi$ is supported in some interval $[a, b] \subset(0, \infty),$ and that $\pi$ converges weakly to some measure $\pi_{\infty}$ that is absolutely continuous and whose density satisfies  that $\tau \leq d \pi_{\infty}(E)/dE \leq \tau^{-1}$ for $E \in [a,b].$ In this case, $q=1.$
\end{con}

 In all the examples below, we only derive the results for the eigenvalues and leave the discussion and interpretation of the eigenvectors to the readers.  We first provide two examples satisfying Condition \ref{condition1}.
\begin{exam}[Johnstone's spiked  covariance matrix model, i.e. BBP transition \cite{BBP}] \label{exam_classicrankone}  Let $\sigma_i=1, \ i=1,2,\cdots, p.$  We suppose that $r=1$ { and $\widetilde{\sigma}_1=1+d$  with $d \equiv d_1$ in the following discussion.} Since  $f(x)=-x^{-1}+c^{-1}(x+1)^{-1},$ it can be easily checked that the critical points of $f(x)$ are $-\frac{\sqrt{c}}{\sqrt{c}-1}, \ -\frac{\sqrt{c}}{\sqrt{c}+1},$ which implies that $q=1.$ By (\ref{outlier_out}), the convergent limit of the largest eigenvalue is  $\mu=f(-(d+1)^{-1})=1+d+c^{-1}(1+d^{-1}),$ and the phase transition happens when $d > c^{-1/2}.$ Furthermore,  the local convergence result reads as 
\begin{equation*}
\left|\mu_1-f(-\frac{1}{d+1})\right| \prec n^{-1/2}(d-c^{-1/2})^{1/2},
\end{equation*}
which agrees with \cite[Theorem 2.3]{BKYY}.
\end{exam}
\begin{exam} [Spiked model with variance clusters]\label{exam_multibulk} Consider that 
\begin{equation*} 
\widetilde{\Sigma}=\operatorname{diag}\{35, \underbrace{18, \cdots,  18}_\text{$p/2-1$  \ times}, 4, \underbrace{1, \cdots, 1}_\text{$p/2-1$ \ times}\}.
\end{equation*}
For detailed computation,  we set $c=2.$ Since $f(x)=-x^{-1}+0.25\big((x+\frac{1}{18})^{-1}+(x+1)^{-1}\big),$ we find that the critical points are  approximately $-2.39, -0.626,$ $-0.11, -0.037.$ Hence, $q=2.$  Due to the fact that
\begin{equation*}
f(-\frac{1}{35})>f(-0.037), \ f(-\frac{1}{4})>f(-0.626),
\end{equation*}
we find that there are two outlier eigenvalues  and they will locate around $f(-\frac{1}{35}), f(-\frac{1}{4})$. The local convergent results can be derived similarly. 

\end{exam}
Next we provide two examples satisfying Condition \ref{condition_2}, where there exists only one bulk component. 
\begin{exam}[Spiked model with uniformed distributed eigenvalues]\label{exam_uni} Consider that 
\begin{equation*}
\widetilde{\Sigma}=\operatorname{diag}\{8, 2.9975, 1.995, \cdots, 1.005, 1.0025 \}.
\end{equation*}
The limiting spectral distribution of $\Sigma$ is the uniform distribution on the interval $[1,3].$ Let $c=2$ and we have 
$f(x)=-(2x)^{-1}-(4x^2)^{-1} \log( (3x+1)(x+1)^{-1}).$ Its critical points are approximately $-2.005 ,\ -0.25.$ Therefore, the left and right edges are 
\begin{equation*}
f(-2.005) \approx 0.1494, \ f(-0.25) \approx 6.3941.
\end{equation*}
Hence, there exists one outlier eigenvalue since $f(-\frac{1}{8})>6.3941.$ { We remark that we are using the limits of $\pi$ and $f$ in our computation since when $p$ becomes larger,  $\pi$ (Recall (\ref{def_pi})) will converge to its limit quickly. }
\end{exam}

\begin{exam} [Spiked Toeplitz matrix]\label{exam_top} Suppose that $\Sigma$ is a Toeplitz matrix whose $(i,j)$-th entry is $0.4^{|i-j|}$ and the largest eigenvalue of $\widetilde{\Sigma}$ is $10$. We choose $c=2$ and $f$ can be written as
\begin{equation*}
f(x)=-\frac{1}{2x}-\frac{1}{3.81x^2} \log \left(\frac{2.332x+1}{0.4286x+1}\right),
\end{equation*}
where the interval $[0.4286, 2.332]$ is approximately the support of the population eigenvalues. The critical points of $f$ are approximately $-0.33, -3.62.$ Therefore, the left and right edges are 
\begin{equation*}
f(-3.62) \approx 0.0859, \ f(-0.33) \approx 4.3852.
\end{equation*}
There exists one spiked eigenvalue since $f(-\frac{1}{10}) >4.3852.$ 
\end{exam}

{ Before concluding this section, we discuss how Examples \ref{exam_classicrankone}--\ref{exam_top} follow from some previous works in the literature.  First all, in \cite{BGN}, the authors derived the convergent limits using the framework of free probability theory. In this sense, all the convergent limits in the above examples can be computed using the results of \cite{BGN}. However, in practice, when $\Sigma \neq I,$ it is generally difficult to write down the formulas explicitly using free probability theory. Our results provide a more practical way to write down the results.  Moreover, there are no results on the convergent rates in \cite{BGN}. Second, Examples \ref{exam_classicrankone}--\ref{exam_uni} follow from \cite{BY} since the population covariance matrices are diagonal in these examples. Finally, Example \ref{exam_classicrankone} follows from some previous works on Johnstone's spiked covariance matrix model, for instance \cite{BBP, BKYY, DP}. }

\section{Some remarks on statistical applications}\label{section_exapp}


This section is devoted to discussing the statistical applications of our results. For the ease of discussion, we consider the following case by strengthening (\ref{defn_oplus}) to
\begin{equation*}
\mathcal{O}_k'=\{i: x_{2k-1} + \varsigma \leq-\tilde{\sigma}_{k,i}^{-1}<x_{2(k-1)}\}, \ k=1,2, \cdots, q,
\end{equation*} 
for some small constant $\varsigma>0.$  {We assume that all the $r$ spikes belong to such sets.  Till the end of the discussion of this section, we also assume that there exists some constant $\varsigma>0$ so that for all $(k_1,i), (k_2,j) \in \mathcal{I},$
\begin{equation*}
|\widetilde{\sigma}_{k_1,i}-\widetilde{\sigma}_{k_2,j}| \geq \varsigma, \ i \neq j. 
\end{equation*}
The above assumption together with (\ref{defnregularityequation1}) guarantee that the outlier eigenvalues will be well-separated from each other.
}

Since our model is an extension of Johnstone's spiked covariance matrix model, {we can extend the statistical results in a more general setting.} For instance, the estimation of spiked eigenvalues \cite{BD}, the detection of number of spikes \cite{PY2012},  the spectrum estimation \cite{NK}, the eigenvector estimation \cite{MV},  and the estimation of high dimensional covariance matrices \cite{CRZ,DGJ, FLM, LW}. We now focus on a few concrete applications to illustrate our results. 

\subsection{Estimation of {the} number and localtion of spikes for $\widetilde{\Sigma}$} \label{sec:estnumberclsuter}   

An important parameter for estimation in our model is the number of spikes.  In practice, it may have important meanings, for instance the number of signals \cite{KN} in signal detection, the number of factors in factor model \cite{BN} and the number of clusters \cite{JW} in cluster analysis. 

In \cite{PY2012}, the authors proposed an algorithm for estimating the number of spikes $r$ assuming (\ref{eq_sigmaoriginal}).  They employ the differences between consecutive eigenvalues as their statistic, i.e., for a carefully determined threshold $\mathfrak{d}=O_{\prec}(n^{-2/3})$ and a fixed constant $s,$ they use the statistic,
\begin{equation}\label{eq_rhat}
\hat{r}=\max \{1 \leq j \leq s:  \delta_j \geq \mathfrak{d} \ \text{and} \ \delta_{j+1}<\mathfrak{d} \}, \ \delta_j=\mu_j-\mu_{j+1}. 
\end{equation}  
The above method can be only applied when all the spikes are  associated with the first bulk component. To address this issue, we shall first estimate the number of bulk components $q$ using $q^*$ defined as 
\begin{equation}\label{q*}
q^*=|\{1 \leq j \leq p \wedge n:  \delta_j \leq \mathfrak{d} \ \text{and} \ \delta_{j+1}>\mathfrak{d} \}|.
\end{equation}
After getting the estimate $q^*,$ we can relabel all the sample eigenvalues using $q^*.$ Next, we estimate the number of spikes as 
{
\begin{equation}\label{r*}
r^*=\sum_{k=1}^{q^*} r_k^*, 
\end{equation}
}
where $r_k^*$ is defined as 
\begin{equation*}
r_k^*=\max \{1 \leq j \leq s:  \delta^k_j \geq \mathfrak{d} \ \text{and} \ \delta^k_{j+1}<\mathfrak{d} \}, \ \delta^k_j=\mu_{k,j}-\mu_{k, j+1}. 
\end{equation*} 

As we can see from the definitions of $r_k^*$ and $q^*,$ the sharp transition from a larger difference to a smaller one determines the number of spikes and the opposite determines the number of bulk components. Further, we can also get the location of spikes 
\begin{equation}\label{eq_location}
\{(k,i): 1 \leq k \leq q^*, 1 \leq
 i \leq r_k^*\},
 \end{equation}
 according to our statistic. Following the proof of \cite[Theorem 1]{PY2012}, it is easy to see that 
\begin{equation*}
\mathbb{P}(r^*=r) \rightarrow 1, \ n \rightarrow \infty. 
\end{equation*}

As a concrete application of the new estimator $r^*,$  we consider the estimation of covariance matrix using factor model, where  the key inputs are the number and locations of the factors.  In the lierature of financial economics, stock returns are modeled using a few common factors \cite{FLM} 
\begin{equation}\label{defn_factor}
Y_{it}=\mathbf{b}_i^\T \mathbf{f}_t+u_{it}, t=1,2,\cdots, n,
\end{equation} 
where $Y_{it}$ is the return of the $i$-th stock at time $t,$ $\mathbf{b}_i$ is a vector of factor loadings, $\mathbf{f}_t$ is a $r \times 1$ vector of latent common factors and $u_{it}$ is the idiosyncratic component, which is uncorrelated with $\mathbf{f}_t.$  The matrix form of (\ref{defn_factor}) can be written as $\mathbf{Y}_t=\mathbf{B}\mathbf{f}_t+\mathbf{u}_t.$ For the purpose of identifiability, we impose the following constraints \cite{FLM}: $\operatorname{Cov}(\mathbf{f}_t)=\mathbf{I}_r$ and the columns of $\mathbf{B}$ are orthogonal. As a consequence, the population covariance matrix can be written as 
\begin{equation} \label{defn_factor_cov}
\widetilde{\Sigma}=\mathbf{B}\mathbf{B}^\T+\bm{\Sigma}_u.
\end{equation}
{We further assume that $\bm{\Sigma}_u$ is diagonal. Therefore,} (\ref{defn_factor_cov}) can be represented using (\ref{sigma_mostgene1}).  The estimation can be computed using the constraint least square optimization \cite{FLM, SW}
\begin{align*}
&\operatorname{arg} \min_{\mathbf{B, F}} ||\mathbf{Y}-\mathbf{BF^\T} ||_F^2, \\
& n^{-1}\mathbf{F}^\T \mathbf{F}=I_r, \ \mathbf{B^\T B} \ \text{is diagonal}. \nonumber
\end{align*}
The least square estimator for $\mathbf{B}$ is $\widehat{\mathbf{\Lambda}}=n^{-1}\mathbf{Y} \widehat{\mathbf{F}},$ where the columns of $\widehat{\mathbf{F}}$ satisfy that $n^{-1/2}\widehat{F}_i$ is the eigenvector corresponds to the $i$th largest eigenvalue of $\mathbf{Y}^\T \mathbf{Y}.$ In \cite{FLM}, the authors showed that $\mathbf{B}\mathbf{B}^\T$ corresponded to the spiked parts whereas $\bm{\Sigma}_u$ the non-spiked part. In most of the applications, $\bm{\Sigma}_u$ is assumed to have some sparse structure. Hence, the estimator can be written as $\widehat{\mathbf{\Sigma}}=\widehat{\mathbf{\Lambda}}_r\widehat{\mathbf{\Lambda}}_r^*+\widehat{\mathbf{\Sigma}}_u,$ where $\widehat{\mathbf{\Lambda}}_r$ is the collection of the columns correspond to the factors and $\widehat{\mathbf{\Sigma}}_u$ is the estimation using some thresholding method by analyzing the residual.

\begin{figure}[!ht]
\begin{center}
  \includegraphics[height=5cm, width=10cm]{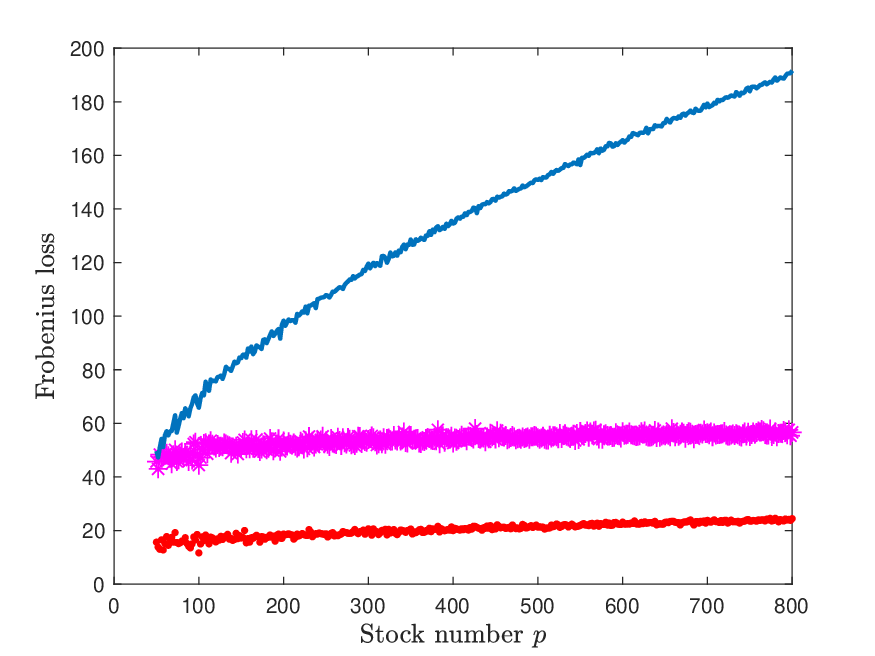} 
  \end{center}
  \vspace*{-6mm}
\caption{Estimation loss using factor model { for $\widetilde{\Sigma}$ in (\ref{defn_factor_cov})}. We simulate the estimation error  { for $\widetilde{\Sigma}$} under Frobenius norm using Example \ref{exam_multibulk}, where the blue line stands for the sample covariance matrix estimation, red dots for our Multi-POET estimation and magenta dots for the POET estimation. We find that our new estimator can help to improve the estimation accuracy. }
 \label{figmultipoet}
\end{figure}

In \cite{FLM}, the authors use the Principal Orthogonal complEment Thresholding (POET) method to numerically compute the estimator. In their setting, they estimate $r$ by solving a numerical optimization problem from \cite{BN}, which is similar to the idea of using $\hat{r}$ in (\ref{eq_rhat}).  However, in practice, due to volatility cluster \cite{VBD}, such factors can locate anywhere. Therefore, we extend POET to a multiple version (Multi-POET) by using $r^*$ in (\ref{r*}) and the locations (\ref{eq_location}). For instance, consider $\widetilde{\Sigma}$ from  Example \ref{exam_multibulk}, there are two factors and the indices of their locations are $1, \frac{p}{2}+1.$   Figure \ref{figmultipoet} shows that our results can help to reduce the estimation loss in  such situation. {We remark that in \cite{FLM}, they assume that the spikes $\widetilde{\sigma}_{k,i}, \ (k,i) \in \mathcal{I}$ are of order $O(p).$ In this case, all the outlier eigenvalues will be associated with the first bulk component and hence POET will work. However, when the spikes are of the same order as the bulk population eigenvalues, for instance in the study of neural activity \cite{BMCP, LSBT}, we need to use Multi-POET.}

\subsection{{Nonlinear shrinkage estimation} for $\widetilde{\Sigma}$: the spiked case} In many situations, we have no information  on the true eigenvectors of $\widetilde{\Sigma}.$  A natural choice for us is to use the sample eigenvectors $\{\ub_{i}\}$. Given some loss function $\mathcal{L}(\cdot, \cdot),$ we need to find a diagonal matrix $\widehat{\Lambda},$ such that $\mathcal{L}(\widetilde{\Sigma}, \mathbf{U} \widehat{\Lambda}\mathbf{U}^\T)$ is minimized, where $\mathbf{U}=(\mathbf{u}_i).$ Then our estimator will be $\widehat{\Sigma}=\mathbf{U}\widehat{\Lambda}\mathbf{U}^\T$. Such an estimator is oracle since it achieves suboptimality when we restrict ourselves on the sample eigenvectors. Some special cases have been considered in \cite{Jbun, DGJ}. In this section, we revisit this problem using our proposed model. 

 We consider a special case when $\widehat{\Lambda}$ has a closed-form solution by setting 
\begin{equation*}
 \mathcal{L}(\widetilde{\Sigma}, \widehat{\Sigma})=\norm{\widetilde{\Sigma}-\widehat{\Sigma}}_F^2, \ \widehat{\Sigma}=\mathbf{U}\widehat{\Lambda}\mathbf{U}^\T.
\end{equation*}
For the ease of discussion, we assume that $q=1$ and hence we can rewrite $\mathcal{I}=\{i: 1 \leq i \leq r\}.$ First of all, we have 
\begin{equation}\label{oracle_decom}
\norm{\widetilde{\Sigma}-\widehat{\Sigma}}_F^2=\left(\norm{\widetilde{\Sigma}}^2_F-\norm{\operatorname{diag}(\mathbf{U}^\T \widetilde{\Sigma}\mathbf{U})}_F^2\right)+\norm{\widehat{\Lambda}-\operatorname{diag}(\mathbf{U}^\T \widetilde{\Sigma}\mathbf{U})}_F^2.
\end{equation}
Since the first part of the right-hand side of (\ref{oracle_decom}) cannot be optimized,  we should take $\widehat{\Lambda}=\operatorname{diag}(\mathbf{U}^\T \widetilde{\Sigma} \mathbf{U})=\text{diag}\{\widehat{\lambda}_1, \cdots, \widehat{\lambda}_p\},$ where    
\begin{equation}\label{oracle_est_decom}
\widehat{\lambda}_i =\mathbf{u}^\T_i  \widetilde{\Sigma} \mathbf{u}_i, \ i=1,2,\cdots, p. 
\end{equation}   
%
%
We consider the estimation of $\widehat{\lambda}_i$ for $i \in \mathcal{I}.$  First of all, we observe that (\ref{oracle_est_decom}) can be written as 
\begin{equation}\label{oracel_diag_decom}
\widehat{\lambda}_i=\sum_{j \in \mathcal{I}} \tsig_j\langle \mathbf{v}_j, \mathbf{u}_i \rangle^2+\sum_{j \notin \mathcal{I}} \tsig_j \langle \mathbf{v}_j, \mathbf{u}_i \rangle^2.
\end{equation}
For $i \in \mathcal{I},$ by (\ref{eveout_eq}), the first part of the right-hand side of (\ref{oracel_diag_decom}) satisfies
\begin{equation}\label{RIE1}
\sum_{j \in \mathcal{I}} \tsig_j \langle \mathbf{v}_j, \mathbf{u}_i\rangle^2 \rightarrow \frac{f^{\prime}(-\tsig_i^{-1})}{f(-\tsig^{-1}_i)}, \ n\rightarrow \infty. 
\end{equation}
For the second part of (\ref{oracel_diag_decom}),   by (4.16) of \cite{BBP2017}, we have 
\begin{equation} \label{RIE2}
\sum_{j \notin \mathcal{I}} \tsig_j \langle \mathbf{v}_j, \mathbf{u}_i \rangle^2 \rightarrow \frac{1}{n} \frac{\tsig_j^2}{f(-\tsig_j^{-1})} \sum_{j=1}^p \frac{\sigma_j^2}{(\tsig_i-\sigma_j)^2},  \ n \rightarrow \infty. 
\end{equation}
Meanwhile, inserting $f(-\tsig_i^{-1})$ back into (\ref{defnf}), we get
\begin{equation}\label{oracl_insetbackeq}
f(-\tsig_i^{-1})=\tsig_i+\frac{1}{n} \sum_{j=1}^p \frac{1}{-\tsig_i^{-1}+\sigma_j^{-1}}.
\end{equation}
Differentiating with respect to $\tsig_i$ on both sides of (\ref{oracl_insetbackeq}), we get
\begin{equation}\label{RIE3}
f^{\prime}(-\tsig_i)(\tsig_i)^{-2}=1-\frac{1}{n}\sum_{j=1}^p \frac{\tsig_j^2}{(\tsig_i-\sigma_j)^2}.
\end{equation} 
Therefore, by (\ref{oracel_diag_decom}), (\ref{RIE1}), (\ref{RIE2}) and (\ref{RIE3}), when $i \in \mathcal{I}, $ we have that
{
\begin{equation}\label{oracle_out}
\widehat{\lambda}_i \rightarrow \frac{(\widetilde{\sigma}_i)^2}{f(-1/\widetilde{\sigma}_i)}, \ n \rightarrow \infty.  
\end{equation}
(\ref{oracle_out}) needs an estimate of $\widetilde{\sigma}_i$ and the estimation of $\widetilde{\sigma}_i$ has been studied in \cite[Theorem 3.5]{DY2}. We record the estimator for the reader's convenience. We use $\widehat{\sigma}_i, \ i \in \mathcal{I}$ to estimate $\widetilde{\sigma}_i,$  where 
\begin{equation*}
\widehat{\sigma}_i=-\left(\frac{1}{n} \sum_{j \notin \mathcal{I}} \frac{1}{\mu_j-\mu_i}\right)^{-1}.
\end{equation*}

}

{We remark that in \cite{LW}, the authors proposed explicit formulas for $\widehat{\lambda}_i, i=1,2,\cdots,p,$ in (\ref{oracle_est_decom})  when $\widetilde{\Sigma}$ has no spikes. Indeed, this relates to the estimation of the non-outlier part $\widehat{\lambda}_i, \ i \notin \mathcal{I}$ in our setting. In this section, we have derived the formulas for $\widehat{\lambda}_i, \ i \in \mathcal{I}$ and we will consider  the estimation of $\widehat{\lambda}_i, \ i \notin \mathcal{I}$ in the future work.   }

\section{Preliminaries}\label{section_tools}
This section is devoted to introducing the basic tools for our proofs: the anisotropic local laws and perturbation arguments. 
\subsection{Anisotropic local law} In this section, we collect the results of the anisotropic local laws from \cite{KY1}.  We first summarize the properties of $m(z)$ defined in (\ref{defnf}). Its proof can be found in \cite[Lemmas A.4 and A.5]{KY1}.  For fixed small constant $\tau>0$ and $z=E+\ri \eta,$ we define the spectral domains
\begin{equation} \label{full}
\mathbf{S} \equiv \mathbf{S}(\tau, n):=\{z \in \mathbb{C}^{+}: |z| \geq \tau, |E|\leq \tau^{-1},  n^{-1+\tau} \leq \eta \leq \tau^{-1}\}.
\end{equation}
Further, for some constant $\tau'>0,$ we define 
\begin{equation}\label{edge}
\mathbf{S}^e_i \equiv \mathbf{S}^e_i(\tau^{\prime}, \tau, n):=\{z \in \mathbf{S}: E \in [a_{i}-\tau^{\prime}, a_{i}+\tau^{\prime}]\}, \ i=1,2,\cdots, 2q.
\end{equation}
{ For any two quantities $a_n$ and $b_n,$ we write $a_n \sim b_n$ if there exist two positive constants $C_1$ and $C_2$ such that $C_1|b_n| \leq |a_n| \leq C_2 |b_n|.$}
Denote $\kappa(E):=\min_{1 \leq
 k \leq 2q}|E-a_k|,$ we have 
\begin{lem} \label{lem_propm} For $z \in \mathbf{S},$ we have 
\begin{equation} \label{propm_equ1}
\operatorname{Im} m(z) \sim 
\begin{cases}
\sqrt{\kappa+\eta}, & \textsl{$E \in \operatorname{supp}(\rho)$ }; \\
\frac{\eta}{\sqrt{\kappa+\eta}}, & \text{E $\notin \operatorname{supp}(\rho)$}.
\end{cases},
\end{equation}
and 
\begin{equation}\label{propm_sta}
\min_i|m(z)+\sigma_i^{-1}| \geq \tau,
\end{equation}
where $\tau>0$ is some constant. Furthermore, if $z \in \mathbf{S}^e_i,$ we have
\begin{equation}  \label{propm_equ2}
| m(z)-x_i|\sim \sqrt{\kappa+\eta}.
\end{equation}
\end{lem}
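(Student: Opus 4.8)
The plan is to follow the strategy of \cite[Appendix A]{KY1}, whose core is the square-root behaviour of $\rho$ at each regular edge, read off from the function $f$ of \eqref{defnf}. Recall from Lemma \ref{propertiesofrho} that $\operatorname{Supp}(\rho)\cap(0,\infty)=\cup_k[a_{2k},a_{2k-1}]$ with $a_k=f(x_k)$ and $x_k=m(a_k)$, the $x_k$ being the critical points of $f$. The first step is to record that, under Assumption \ref{assu_regularity}, each finite $x_k$ is bounded and bounded away from $0$ and from the poles $-(\sigma^b_i)^{-1}$ of $f$: boundedness and positivity of $a_k$ (Lemma \ref{propertiesofrho}, Assumption \ref{assu_regularity}(i)) together with $f(x)\to+\infty$ as $x\to0^-$ and $f(x)\to-\infty$ as $x\to0^+$ force $|x_k|\sim1$; moreover $|x_k|\le(\sigma^b_M)^{-1}\le\tau^{-1}$ for $k\le2p-1$, and the third inequality in \eqref{defnregularityequation1} gives $\min_i|x_k+(\sigma^b_i)^{-1}|\ge\tau$. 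Differentiating \eqref{defnf} once and twice and invoking $\tau<\sigma^b_i<\tau^{-1}$, $\tau\le c_N\le\tau^{-1}$, this yields $f'(x_k)=0$ together with $|f''(x_k)|\sim1$; the non-degeneracy built into Assumption \ref{assu_regularity} is exactly what keeps $|f''(x_k)|$ bounded below.

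With this in hand I would prove the edge estimates. Fix $z\in\mathbf{S}^e_i$, put $w=m(z)-x_i$, $\zeta=z-a_i$, and combine $z=f(m(z))$ (Lemma \ref{lem_ffd}) with the expansion $f(m)=a_i+\tfrac12 f''(x_i)w^2+O(|w|^3)$ valid on a fixed neighbourhood of $x_i$. Inverting, with the branch of the square root pinned by $\operatorname{Im} m\ge0$, gives $|w|\sim|\zeta|^{1/2}\sim(\kappa+\eta)^{1/2}$ with $\kappa=|E-a_i|$, which is \eqref{propm_equ2}. Since $x_i$ is real, $\operatorname{Im} m(z)=\operatorname{Im} w$; the sign of $f''(x_i)$ — forced to be positive at the right edges $a_{2k-1}$ and negative at the left edges $a_{2k}$ by the requirement $\operatorname{Im} m>0$ on the support — places $\arg\zeta$ near $\pi$ when $E$ is on the support side and near $0$ when $E$ is outside, so that $\operatorname{Im} w\sim\sqrt{\kappa+\eta}$ in the first case and $\operatorname{Im} w\sim\eta/\sqrt{\kappa+\eta}$ in the second; this is \eqref{propm_equ1} on the edge domains.

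It remains to cover the rest of $\mathbf{S}$ and to prove \eqref{propm_sta}. On $\mathbf{S}^b_i$, Assumption \ref{assu_regularity}(ii) gives $\rho\ge\nu$ on $[a_{2i}+\tau',a_{2i-1}-\tau']$ while $\|\rho\|_\infty\sim1$ (square-root edges with $|f''(x_k)|\sim1$ plus interior analyticity, the classical structure of $\rho$ from \cite{SC,HHN}), so from $\operatorname{Im} m(E+i\eta)=\pi\int\eta\rho(x)((E-x)^2+\eta^2)^{-1}dx$ and $\kappa\sim1$ we get $\operatorname{Im} m\sim1\sim\sqrt{\kappa+\eta}$; on $\mathbf{S}^o$ every $x\in\operatorname{Supp}(\rho)$ satisfies $|E-x|\sim1$, hence $\operatorname{Im} m\sim\eta\sim\eta/\sqrt{\kappa+\eta}$, completing \eqref{propm_equ1}. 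For \eqref{propm_sta}: on the edge domains $m(z)$ lies within $O(\sqrt{\kappa+\eta})=o(1)$ of $x_i$, which is $\ge\tau$ from every pole, so the bound holds with $\tau/2$ once $\tau'$ is small; on $\mathbf{S}^b_i$ the lower bound $\operatorname{Im} m\ge\pi\nu$ separates $m(z)$ from the real poles; and on $\mathbf{S}^o$, where $m$ is real with $m'=1/f'(m)>0$ (Lemma \ref{lem_ffd}), $m(z)$ moves monotonically between the critical values $x_k$ at the two adjacent edges (or past the extreme edges), all of which are $\ge\tau$ from the poles, so the bound propagates across the gap. I expect the real work — which is the content of \cite[Lemmas A.4, A.5]{KY1} — to be the bookkeeping needed to make every estimate uniform in $N$ and in $z\in\mathbf{S}$, in particular stitching the edge, bulk and outer regimes together without gaps and controlling \eqref{propm_sta} in the transition zones between them.
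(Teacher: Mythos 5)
The paper gives no proof of this lemma at all --- it simply cites \cite[Lemmas A.4 and A.5]{KY1} --- and your sketch is a correct reconstruction of exactly that argument: square-root inversion of the quadratic expansion of $f$ at the regular critical points for the edge regimes, the bulk density lower bound and the trivial $\operatorname{Im} m\sim\eta$ estimate away from the spectrum, and the resulting separation of $m(z)$ from the poles $-(\sigma_i^b)^{-1}$. Since you also correctly identify where the remaining (deferred) work lies --- the uniformity bookkeeping across the three regimes --- there is nothing to add.
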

Next we introduce the anisotropic local laws. Let $\mathcal{Q}_2=X^\T \Sigma X.$ Recall that the ESD of $\CQ_2$  is defined as
\begin{equation*}
F^{(n)}_{\CQ_2}(\lambda):=\frac{1}{n} \sum_{i=1}^n \mathbf{1}_{\{\lambda_i(\CQ_2) \leq \lambda\}}.
\end{equation*}
Further,  the Stieltjes transform of the ESD of $\CQ_2$ is given by
\begin{equation}\label{defn_stiel}
m_n(z):=\int \frac{1}{x-z}dF^{(n)}_{\CQ_2}(x)=\frac{1}{n} \mathrm{Tr} \CG_2 (z), \ z \in \mathbb{C}^+, 
\end{equation}
where  $\CG_2(z):=(\CQ_2-z)^{-1}.$  The first lemma collects the results when $z \in \mathbf{S},$ which is \cite[Theorem 3.6]{KY1}. Denote the deterministic control parameter
\begin{equation*}
\Psi(z):=\sqrt{\frac{\operatorname{Im}m(z)}{n\eta}}+\frac{1}{n\eta},
\end{equation*}
and the Green function of $\CQ_1$ as $\CG_1(z).$ 
\begin{lem} \label{lem_anisotropic} Suppose (\ref{eq_regemic}), (\ref{eq_momentx12}), (\ref{eq_mimentxhigh}), (\ref{assm_11}) and Assumption \ref{assu_regularity} hold true. Fix $\tau>0$, for $z \in \mathbf{S}$ and any unit vectors $\mathbf{u}, \mathbf{v} \in \mathbb{R}^{n}$ and $\mathbf{u}_1, \mathbf{v}_1 \in \mathbb{R}^{p}$  
\begin{equation*}
 \langle \mathbf{u},  (\CG_2(z)-m(z))\mathbf{v} \rangle \prec \Psi(z), \  \Big \langle \mathbf{u}_1,  \Big(\CG_1(z)-\frac{-1}{z(1+m(z) \Sigma )} \Big)\mathbf{v}_1 \Big \rangle \prec \Psi(z),
\end{equation*}
and $$m_n(z)-m(z) \prec \frac{1}{n \eta}.$$
\end{lem}
Outside the support of the asymptotic spectrum, we have stronger control all the way down to the real axis, which is \cite[Theorem 3.7]{KY1}.  

\begin{lem}\label{lem_anisoout} Under the assumptions of Lemma \ref{lem_anisotropic}, for $z \in [\tau, \tau^{-1}] \times (0, \tau^{-1}]$ satisfying $\text{dist}(E, \operatorname{supp}(\rho)) \geq n^{-2/3+\tau},$ we have 
\begin{align*}
&\langle \ub, (\CG_2(z)-m(z)) \vb \rangle \prec \sqrt{\frac{\im m(z)}{n \eta}},  \\
&  \Big \langle \mathbf{u}_1, 
  \Big(\CG_1(z)-\frac{-1}{z(1+m(z) \Sigma )} \Big)\mathbf{v}_1 \Big \rangle \prec \sqrt{\frac{\im m(z)}{n \eta}}. 
\end{align*}
\end{lem}

Finally, we record the results on the rigidity of eigenvalues. It is an important consequence of Lemma \ref{lem_anisotropic} and proved in \cite[Theorem 3.12]{KY1}.  For $k=1,2,\cdots,q$ and $i=1,2,\cdots, n_k,$ we relabel the eigenvalues for $\mathcal{Q}_1$ by denoting
\begin{equation} \label{defn_relabel}
\lambda_{k,i}:=\lambda_{i+\sum_{l<k} n_l}. 
\end{equation}
Further, we define the \emph{classical eigenvalue locations} of $\rho$ by   $\gamma_1 \geq \gamma_2 \geq \cdots \geq \gamma_{n \wedge p},$ where  

$$n \int_{\gamma_i}^{\infty} d \rho=i-\frac{1}{2}.$$

Similar to the relabellings (\ref{defn_relabel}), we denote 
\begin{equation*}
\gamma_{k,i}:=\gamma_{i+\sum_{l<k} n_l} \in (a_{2k}, a_{2k-1}).
\end{equation*}
It is notable that $\gamma_{k,i}$ can also be characterized through $n \int_{\gamma_{k,i}}^{a_{2k-1}}d\rho=i-\frac{1}{2}.$

\begin{lem} \label{lem_rigidedgeuni} Fix $\tau>0.$ Suppose the assumptions of Lemma \ref{lem_anisotropic} hold. For all $k=1,\cdots, q$ and $i=1,\cdots, n_k$ satisfying $\gamma_{k,i} \geq \tau,$ we have 
\begin{equation} \label{lem_rigi_equ}
|\lambda_{k,i}-\gamma_{k,i}| \prec n^{-2/3} (i \wedge(n_k+1-i))^{-1/3}.
\end{equation}
\end{lem}

\subsection{Perturbation analysis} In this section, we provide the perturbation identities. They provide the natural connection  between the locations of eigenvalues and eigenvectors with the Green function $\CG_1(z).$  To ease the discussion of the proof, till the end of the paper, we use the following parameterization for (\ref{sigma_mostgene1}).  { Moreover, from now on till the end of the paper, we will also use $\mathcal{I}$ as the index set of the spikes before relabellings. }  We denote {
\begin{equation} \label{sigma_mostgene}
\widetilde{\Sigma}=\sum_{i=1}^p \tilde{\sigma}_i \mathbf{v}_i \mathbf{v}_i^\T, \ \text{where} \ \tilde{\sigma}_i=(1+d_i) \sigma_i,
\end{equation} 
}
and 
{
\begin{equation*}
d_i>0, \ i \in \mathcal{I}, \ \text{and} \ d_i=0, \ i \notin \mathcal{I}. 
\end{equation*}
} 
We next rewrite $\widetilde{\Sigma}$ for future convenience.  Denote the diagonal matrix $\widetilde{D}=\text{diag}\{\tilde{d}_1, \cdots, \tilde{d}_p\}.$ With the above notations, we can rewrite 
\begin{equation} \label{sig_express}
\widetilde{\Sigma}=\Sigma(1+\mathbf{V}\widetilde{D}\mathbf{V}^\T)=(1+\mathbf{V}\widetilde{D}\mathbf{V}^\T) \Sigma,
\end{equation}
where $\mathbf{V}=(\mathbf{v}_i)_{i=1}^p.$  Furthermore, denote the $r \times r$ matrix $D_o$ as the diagonal matrix whose nonzero entries being { $\{d_i\}_{i \in \mathcal{I}}$, where we recall that we also use $\mathcal{I}$ as the index set of the spikes before relabellings.} Indeed, we have that 
\begin{equation} \label{invetibletransfer}
\mathbf{V}\widetilde{D} \mathbf{V}^\T=\sum_{i \in \mathcal{I}} d_i \mathbf{v}_i \mathbf{v}_i^\T=\mathbf{V}_o D_o \mathbf{V}_o^\T,
\end{equation}
where $\mathbf{V}_o$ is a $p \times r$ matrix containing $\mathbf{v}_i, \ i \in \mathcal{I}.$  For the convenience of the statements,  we introduce more relabellings. For $k=1,2,\cdots, q$ and $i=1,2,\cdots, r_k,$ we next introduce the relabelings for { $\{d_i\}_{i \in \mathcal{I}}$ } by 
{
\begin{equation*}
d_{k,i}=d_{i+\sum_{l<k} n_l}. 
\end{equation*}
} 
 With the above convention, we can relabel $\{\tilde{\sigma}_i, \ i \in \mathcal{I}\}$
\begin{equation}\label{eq_relabelsigma}
\tilde{\sigma}_{k,i}=\sigma_{k,i}(1+d_{k,i}).
\end{equation}

Armed with the above preparation, we now state some lemmas. The counterparts of these lemmas can be found in \cite[Section 3.4 and Lemma 4.1]{BKYY} and we omit the proofs here. 
\begin{lem} \label{lem_pertubation}
For $\mu \in \mathbb{R},$ it is an eigenvalue of $\CTQ_1$ but not $\CQ_1$ if and only if 
\begin{equation}\label{cor_pertueq}
\det(D_o^{-1}+1+\mu \mathbf{V}^\T_o \CG_1(\mu) \mathbf{V}_o)=0.
\end{equation}
\end{lem} 


%

Next we will extend the Weyl's interlacing theorem to fit our setting.
%



\begin{lem} \label{cor_interlacing} Recall the model defined in (\ref{sigma_mostgene}). For $k=1,2,\cdots,q,$ we have
\begin{equation*}
\mu_{s_k+i} \in [\lambda_{k,i+r_k}, \lambda_{k, i-r_k}], \ 1 \leq i \leq n_k,
\end{equation*}
where $s_k=\sum_{i=1}^{k-1}n_i$ and we use the convention that $\lambda_{k,i-r_k}:=\infty$ when $i-r_k<1$ and $n_0=0.$
\end{lem}
The following lemma establishes the connection between the Green functions of $\CG_1$ and $\CTG_1.$  It serves as the key identity for analyzing the eigenvectors. 

\begin{lem} \label{lem_vector}
For $z \in \mathbb{C}^+,$ we have 
\begin{equation*}
\mathbf{V}_o^\T \CTG_1(z) \mathbf{V}_o=\frac{1}{z} \Big [ D_o^{-1}-\frac{(1+D_o)^{1/2}}{D_o}(D_o^{-1}+1+z \mathbf{V}_o^\T \CG_1(z)\mathbf{V}_o)^{-1}\frac{(1+D_o)^{1/2}}{D_o} \Big ].
\end{equation*}
\end{lem}

\section{Proof of Theorem \ref{thm_evout}}\label{section_value}
In this section, we study the convergent limits and rates of the eigenvalues of $\widetilde{\mathcal{Q}}_1$ and prove Theorem \ref{thm_evout}. The proof strategy is similar to those in \cite[Section 4]{BKYY} and we only sketch the proof here. 

Without loss of generality,  we  focus our discussion on the $k$th bulk component, $k=1,2,\cdots,q$.  Throughout the proof, we will make use of the subset of $\mathcal{O}_k$
\begin{equation}\label{eq_otau}
\mathcal{O}^{\epsilon}_k=\{i: x_{2k-1} + n^{-1/3+\epsilon}\leq-\tilde{\sigma}_{k,i}^{-1}<x_{2(k-1)}\}, \ k=1,2, \cdots, q,
\end{equation}
where $\epsilon>0$ is some fixed small constant. 
Regarding on (\ref{defn_oplus}), we find that $\mathcal{O}_k=\mathcal{O}_k^0.$ We will frequently use the following identity (Recall (\ref{key_quantity}))
\begin{equation}\label{determinstic_quant}
-\frac{1}{1+\sigma_{k,i} m(f(-\tilde{\sigma}_{k,i}^{-1}))}=-d_{k,i}^{-1}-1, \ i \in \mathcal{O}_k.
\end{equation}

\begin{proof}
Since the proofs of (\ref{outlier_out}) and (\ref{outlier_nonout}) are similar, we focus our discussion on (\ref{outlier_out}) and sketch the proof of (\ref{outlier_nonout}) in the end of the proof.   For $\epsilon>0$ and  $ i \in \mathcal{O}_k^{\epsilon},$  we denote $\rI_i \equiv \rI_i^k(\widetilde{D})$ by 
\begin{equation*}
\rI_i:=[f(-\tilde{\sigma}_{k,i}^{-1})-(-\widetilde{\sigma}_{k,i}^{-1}-x_{2k-1})^{1/2}n^{-1/2+\epsilon}, f(-\tilde{\sigma}_{k,i}^{-1})+(-\widetilde{\sigma}_{k,i}^{-1}-x_{2k-1})^{1/2}n^{-1/2+\epsilon}],
\end{equation*}
and 
\begin{equation*}
\rI_0:=[0, a_{2k-1}+n^{-2/3+2\epsilon}].
\end{equation*}
For convenience, we denote $\rI=\rI_0 \cup \bigcup_{i \in \mathcal{O}_k^\epsilon} \rI_i.$ 
By (\ref{propm_equ1}) and Lemma  \ref{lem_anisoout},  we conclude that for such $\epsilon>0,$ there exists a high-probability event   $\Xi \equiv \Xi(\tau, \epsilon,k)$ satisfying the following conditions. \\
(i). Denote $\kappa_k:=|x-a_{2k-1}|,$ for $ \  a_{2k-1}<x<a_{2(k-1)},$ we have
{
\begin{equation*}
\mathbf{1}(\Xi) \norm{\Vb_o^\T\Big (x \CG_1(x)-\frac{-1}{1+m(x) \Sigma} \Big ) \Vb_o} \leq \kappa_k^{-1/4} n^{-1/2+\epsilon/2}.
\end{equation*}
}
(ii). For $1 \leq i \leq 2r_k,$ we have 
\begin{equation*}
\mathbf{1}(\Xi)|\lambda_{k,i}-a_{2k-1}| \leq n^{-2/3+\epsilon}. 
\end{equation*}
Since $\mathbf{1}(\Xi) \lambda_{k,1} \leq a_{2k-1}+ n^{-2/3+\epsilon},$ we conclude from (\ref{cor_pertueq}) that $x \notin \rI_0$ is an eigenvalue of $\CTQ_1$ if and only if 
{
\begin{align*}
\mathbf{1}(\Xi)\Big(D_o^{-1}+1+x \Vb_o^\T \CG_1(x) \Vb_o \Big)=\mathbf{1}(\Xi) \Big(D_o^{-1}+1-\frac{-1}{1+m(x) \Lambda_o}+O(\kappa_k^{-1/4}n^{-1/2+\epsilon/2}) \Big),
\end{align*}
}
is singular, where $\Lambda_o$ is a diagonal matrix containing $\{\sigma_i\}_{i \in \mathcal{I}}.$ Here we recall that we have used $\mathcal{I}$ as the index set before relabellings. Denote $\mathcal{L}_i(x)$ by
\begin{equation*}
\mathcal{L}_i(x):=\left(D_o^{-1}+1-\frac{-1}{1+m(x) \Lambda_o} \right)_{ii}. 
\end{equation*}

Since $r_k=O(1),$ it suffices to prove the following lemma.
{
\begin{lem}
For $x \notin \rI,$  on the event $\Xi,$ we have that 
\begin{equation} \label{eq_keyequation}
\min_{i \in \mathcal{I}}\Big| \mathcal{L}_i(x)  \Big| \gg \kappa_k^{-1/4} n^{-1/2+\epsilon/2}.
\end{equation}
\end{lem}
}
The proof of (\ref{eq_keyequation}) relies on (\ref{fderivative})  and $f$ is monotone {increasing} outside the support of $\rho$. It is similar to that of \cite[eq. (4.6)]{BKYY} and we omit the details. This implies that on the event $\Xi,$ the complement of $\rI$ contains no eigenvalues of $\CTQ_1.$ 

Next, to prove (\ref{outlier_out}),  we will show that the neighborhood $\rI_i$ contains the right number of eigenvalues of $\CTQ_1.$ We will use a continuity argument similar to \cite[Sections 6.4 and 6.5]{KY2}. In a first step, we consider $\widetilde{D} \equiv \widetilde{D}(0)$ such that   {
\begin{equation}\label{eq_multi}
\widetilde{\sigma}_{1}>\widetilde{\sigma}_{2}>\cdots>\widetilde{\sigma}_{r}>0, \ \min_{i \neq j} |\widetilde{\sigma}_i-\widetilde{\sigma}_j| \geq \varsigma,
\end{equation}
where $\varsigma>0$ is some fixed constant. }
We will show that 
each interval $\rI_i(\widetilde{D}(0)), \ i \in \mathcal{O}_k^{\epsilon}$ contains precisely one eigenvalue of $\CTQ_1.$  We pick a small $n$-independent counterclockwise (positive-oriented) contour $\mathcal{C} \subset \mathbb{C} \backslash \bigcup_{k=1}^q[a_{2k}, a_{2k-1}]$ that encloses $f(-\widetilde{\sigma}_{k,i}^{-1})$ but no other points. For large enough $n,$ define 
\begin{equation*}
F(z):=\det(D_o^{-1}+1+z\mathbf{V}_o^{\T}\CG_1(z)\mathbf{V}_o), \ K(z):=\det(D_o^{-1}+1-\frac{-1}{1+m(z)\Lambda_o}).
\end{equation*}
It is easy to see that $F(z), K(z)$ are holomorphic on and inside $\mathcal{C}$ and $K(z)$ has precisely one zero $f(-\widetilde{\sigma}_{k,i}^{-1})$ inside $\mathcal{C}.$ On the contour $\mathcal{C}, $ by (\ref{propm_equ1}) and Lemma  \ref{lem_anisoout}, it is easy to check that on $\Xi,$  for some constant $\delta>0,$ 
\begin{equation*}
\min_{z \in \mathcal{C}} |K(z)| \geq \delta>0, \ |F(z)-K(z)|\leq N^{-1/2+\epsilon} \kappa^{-1/4}.
\end{equation*}
We hence conclude from Rouche's theorem that $\rI_i$ contains precisely one eigenvalue of $\CTQ_1$.  This proves (\ref{outlier_out}) under the assumption (\ref{eq_multi}). Next, we deduce the general case using a continuity argument by choosing a suitable continuous path $(\widetilde{D}(t))_{t \in [0,1]}$ connecting $\widetilde{D}(0)$ satisfying (\ref{eq_multi}) and $\widetilde{D}(1)$ without such assumption. We summarize the results as the following lemma and omit the proof. It can be found in \cite[Section 4]{BKYY}. 
{
\begin{lem} \label{lem_boostrap}  Under the assumptions of Theorem \ref{thm_evout}, we have that $\mu_{k,i} \in \rI_i,  \ i \in \mathcal{O}_k^{\epsilon}$  without the assumption (\ref{eq_multi}).    
\end{lem}
}
We then briefly discuss the proof of  (\ref{outlier_nonout}). It follows from a discussion similar to \cite[Proposition 6.5]{KY2}. We first prove for fixed configuration $\widetilde{D} \equiv \widetilde{D}(0)$ when  (\ref{eq_multi}) holds true. Since  $\mu_{k,i} \in \bigcup_{i \in \mathcal{O}_{k}^{\epsilon}} \rI_i, \ i \in \mathcal{O}_{k}^{\epsilon},$  we have 
\begin{equation*}
\mu_{k,i} \in \rI_0, \ i \in [r_k]/\mathcal{O}_k^{\epsilon}. 
\end{equation*}
As Assumption \ref{assu_regularity} holds true, the lower bound follows from the perturbation result Lemma \ref{cor_interlacing} and rigidity result Lemma \ref{lem_rigidedgeuni}. In a second step, we will use a discussion similar (actually easier) to Lemma \ref{lem_boostrap} to remove the assumption  (\ref{eq_multi}). We omit further details here. 

In summary, we have proved that for $\epsilon>0$ and $i \in \mathcal{O}_{k}^{4 \epsilon},$
\begin{equation}\label{eq_pp1}
\mathbf{1}(\Xi)|\mu_{k,i}-f(-\tilde{\sigma}^{-1}_{k,i})|\leq (-\widetilde{\sigma}_{k,i}^{-1}-x_{2k-1})^{1/2}n^{-1/2+\epsilon},
\end{equation} 
and for $i \in [|\mathcal{O}_k^{4\epsilon}|+1,|\mathcal{O}_k^{4 \epsilon}|+\varpi]$ and some constant $C>0,$
\begin{equation}\label{eq_pp2}
\mathbf{1}(\Xi)|\mu_{k,i}-a_{2k-1}| \leq C n^{-2/3+8\epsilon}.
\end{equation}
From (\ref{eq_pp2}) and (\ref{fderivative}), we find that for $i$ satisfying that $x_{2k-1}+n^{-1/3} \leq -\widetilde{\sigma}_{k,i}^{-1} \leq x_{2k-1}+n^{-1/3+4\epsilon},$ we have that 
\begin{align*}
\mathbf{1}(\Xi)|\mu_{k,i}-f(-\tilde{\sigma}_{k,i}^{-1})| & \leq \mathbf{1}(\Xi)(|\mu_{k,i}-x_{2k-1}|+|f(-\tilde{\sigma}_{k,i}^{-1})-x_{2k-1}|) \\
& \leq C (-\widetilde{\sigma}_{k,i}^{-1}-x_{2k-1})^{1/2}n^{-1/2+8\epsilon}.
\end{align*}
This concludes our proof. 
\end{proof}

\section{Proofs of Theorems \ref{thm_eveout} and \ref{thm_evebulk}}\label{section_vector}
\subsection{Proof of Theorem \ref{thm_eveout} under non-overlapping condition}\label{sec_spikedvector}
The proof is similar to that of \cite[Theorems 2.11 and 2.16]{BKYY} and we only sketch the proof here.
We focus our discussion on  (\ref{eveout_eq}). 
We divide our proofs into two steps.  We again focus our discussion on the $k$th bulk component.    First,  we will the results for $\mathcal{O}^{\tau}=\bigcup_{k=1}^q \mathcal{O}_k^{\tau},$ where $\mathcal{O}^{\tau}_k$ is defined in (\ref{eq_otau}) and $\tau>0$ is some fixed constant.  We  prove a proposition satisfying the following \emph{non-overlapping condition}.  
\begin{assu}\label{assum_stronger}
Fix some constant $\tau>0,$ for some constant $\delta>0,$ we assume that for $i  \in \mathcal{O}_k^{\tau},$ we have 
\begin{equation*}
\nu_{i}^k \geq (-\tsig_{k,i}^{-1}-x_{2k-1})^{-1/2} n^{-1/2+\delta}, 
\end{equation*}
where $\nu_{i}^k \equiv \nu_{ii}^k$ is defined in (\ref{defn_nuoij}).  


\end{assu}

\begin{prop}\label{prop_spikeweak} For fixed constant $\tau>0$ and some constant $\delta>0,$ suppose the assumptions of Theorem \ref{thm_eveout} and Assumption \ref{assum_stronger} hold true, then {Theorem \ref{thm_eveout}} holds for $k=1,2,\cdots,q, i,j \in \mathcal{O}_k^{\tau}. $
\end{prop}
\begin{proof} First of all, by (\ref{propm_equ1}),  Lemma \ref{lem_anisoout} and Theorem \ref{thm_evout}, for any $\epsilon<\min\{\tau/3, \delta\},$ there exists a high-probability event $\Xi_1 \equiv \Xi_1(\epsilon,\tau,\delta,k)$ satisfying the following conditions. \\
(i). We have 
{
\begin{equation}\label{eq_evebound1}
\mathbf{1}(\Xi_1) \norm{\Vb_o^\T\Big (z \CG_1(z)-\frac{-1}{1+m(z) \Sigma} \Big ) \Vb_o} \leq (\kappa_k+\eta)^{-1/4} n^{-1/2+\epsilon},
\end{equation}
}
for all $z$ such that $\{z \in \mathbb{C}: \Re \ z \geq a_{2k-1}+n^{-2/3+\omega},  \ |z| \leq \omega^{-1}  \}, \ \omega:=\tau/2.$ \\
(ii). For all $i$ satisfying $x_{2k-1}+n^{-1/3} \leq -\tsig_{k,i}^{-1} \leq \omega^{-1},$  we have that 
\begin{equation*}
\mathbf{1}(\Xi_1)|\mu_{k,i}-f(-\tsig_{k,i}^{-1})| \leq  (-\widetilde{\sigma}_{k,i}^{-1}-x_{2k-1})^{1/2} n^{-1/2+\epsilon}.
\end{equation*}
(iii). We have 
\begin{equation}\label{eq_rigiedge}
\mathbf{1}(\Xi_1)|\mu_{k,r_k^++1}-a_{2k-1}| \leq n^{-2/3+\epsilon}. 
\end{equation}
Till the end of the proof, we fix a realization $\CQ_1$ satisfying the above conditions and hence our discussion is purely deterministic on the high probability event $\Xi_1.$ For $i,j \in \mathcal{O}^{\tau}_k,$ we denote the radius 
\begin{equation*}
\rho_{k,i}:=\frac{ \min\big \{\nu_{i}^k, -\tsig_{k,i}^{-1}-x_{2k-1} \big\} }{2}.
\end{equation*}
We conclude from Assumption \ref{assum_stronger} that
\begin{equation}\label{rho_bound}
\rho_{k,i} \geq \frac{1}{2}(-\tsig_{k,i}^{-1}-x_{2k-1})^{-1/2}n^{-1/2+\delta}.
\end{equation}
We define the contour $\gamma_{k,i}$ as the boundary of the open disc of radius $\rho_{k,i}$ centered at $-\tsig_{k,i}^{-1}.$ We further define 
\begin{equation}\label{defn_gamma}
\Gamma_{{k,i}}:=f(\gamma_{{k,i}}).
\end{equation} 
Using a discussion similar to \cite[Lemma 5.5]{BKYY}, we conclude that  each outlier $\mu_{k,i}$ lies in $\bigcup_{i \in \mathcal{O}_k^\tau}\Gamma_{{k,i}}$ and all the other eigenvalues lie in the complement of  $\bigcup_{i \in \mathcal{O}_k^\tau}\overline{\Gamma}_{{k,i}}.$

 Since we focus on the $k$th  bulk component, we shorten our notations by setting  
\begin{equation*}
\ub_{i} \equiv \ub_{k,i}, \ \vb_{i} \equiv \vb_{k,i}, \ \Gamma_i:=\Gamma_{k,i},\ \gamma_i:=\gamma_{k,i}, d_i=d_{k,i}, \sigma_i:=\sigma_{k,i}. 
\end{equation*}
For  $i, j \in \mathcal{O}^{\tau}_k$, by spectral decomposition and Cauchy's integral formula, we have that
\begin{align}\label{vector_spec}
\langle \mathbf{u}_i, \mathbf{v}_j\rangle^2&=-\frac{1}{2 \pi \ri}\oint_{\Gamma_i} \langle \mathbf{v}_j, \CTG_1(z) \mathbf{v}_j \rangle dz \nonumber \\
& = -\frac{1}{2 \pi \ri} \oint_{\gamma_i}\langle \mathbf{v}_j, \CTG_1(f(\zeta))\mathbf{v}_j \rangle f^{\prime}(\zeta)d\zeta.
\end{align}
Together with Lemma \ref{lem_vector}, Cauchy's integral theorem and (\ref{defn_gamma}), we can further write
 \begin{equation} \label{vector_spec1}
\langle \mathbf{u}_i, \mathbf{v}_j\rangle^2=\frac{1+d_j}{d_j^2} \frac{1}{2 \pi \ri}\oint_{\Gamma_i} (D_o^{-1}+1+z\mathbf{V}_o^\T\CG_1(z)\mathbf{V}_o)^{-1}_{jj} \frac{dz}{z}.
 \end{equation}
Now we introduce the following decomposition
\begin{equation} \label{eq_resolvent}
D^{-1}_o+1+z \mathbf{V}_o^\T\CG_1(z)\mathbf{V}_o=D_o^{-1}+1-\frac{1}{1+m(z)\Lambda_o}-\Delta(z),
\end{equation}
where $\Delta(z)$ is defined as 
\begin{equation*}
 \Delta(z):=\left(-\frac{1}{1+m(z)\Lambda_o}-z\mathbf{V}_o^\T\CG_1(z) \mathbf{V}_o\right).
\end{equation*}
It is notable that $\Delta(z)$ can be well-controlled by (\ref{eq_evebound1}). By the resolvent expansion to the order of two for (\ref{eq_resolvent}) and together with (\ref{vector_spec1}),  we have that 
\begin{equation*}
\langle \mathbf{u}_i, \mathbf{v}_j \rangle^2=\frac{1+d_j}{d_j^2}(s_1+s_2+s_3),
\end{equation*}
where $s_i, i=1,2,3$ are defined as
\begingroup
\allowdisplaybreaks
\begin{align*}
& s_1:=\frac{1}{2 \pi \ri} \oint_{\Gamma_i}\frac{1}{d_j^{-1}+1-(1+m(z) \sigma_j)^{-1}} \frac{dz}{z}, \\
& s_2:=\frac{1}{2 \pi \ri} \oint_{\Gamma_i} \left(\frac{1}{d_j^{-1}+1-(1+m(z) \sigma_j)^{-1}}\right)^2 \left( \Delta(z) \right)_{jj} \frac{dz}{z},
\end{align*}
and $s_3:=\frac{1}{2 \pi \ri} \oint_{\Gamma_i} T_{jj} \frac{dz}{z},$
where $T_{jj}$ is defined as
\begin{equation*}
\left(\frac{1}{D_o^{-1}+1-(1+m(z)\Sigma_o)^{-1}}\Delta(z) \frac{1}{D_o^{-1}+1-(1+m(z)\Sigma_o)^{-1}}\Delta(z) \frac{1}{D^{-1}_o+1+z\mathbf{V}_o^\T \CG_1(z) \mathbf{V}_o} \right )_{jj}.
\end{equation*}
\endgroup

First of all, the convergent limit is characterized by $s_1.$ By the residual theorem and (\ref{key_quantity}), we have  
\begin{align*}
\frac{1+d_j}{d_j^2} s_1=\frac{1}{\tsig_j}\frac{1}{2 \pi \ri} \oint_{\gamma_i} \frac{f^{\prime}(\zeta)}{f(\zeta)} \frac{1+\zeta \sigma_j}{\zeta+\tilde{\sigma}_j^{-1}}  d \zeta 
=\delta_{ij} a_{k,i}.
\end{align*}
Next we bound $s_2$ and $s_3.$ For $s_2,$ we rewrite it as
\begin{equation*}
s_2=\frac{d_j^2}{2\tsig_j^2 \pi \ri} \oint_{\gamma_i} \frac{h_{jj}(\zeta)}{(\zeta+\tsig_j^{-1})^2}d\zeta, \ h_{jj}(\zeta):=(1+\zeta \sigma_j)^2 (\Delta(f(\zeta)))_{jj} \frac{f^{\prime}(\zeta)}{f(\zeta)}.
\end{equation*}
Since $h_{jj}(\zeta)$ is holomorphic inside the contour $\gamma_i,$ by (\ref{fderivative}) and (\ref{eq_evebound1}), we find that 
\begin{equation} \label{hbound}
|h_{jj}(\zeta)| \leq C |\zeta-x_{2k-1}|^{1/2}n^{-1/2+\epsilon}.
\end{equation} 
By Cauchy's differentiation formula, we have
\begin{equation}\label{cauchy_diff}
h^{\prime}_{jj}(\zeta)=\frac{1}{2 \pi \ri} \oint_{\mathcal{C}} \frac{h_{jj}(\xi)}{(\xi-\zeta)^2} d\xi,
\end{equation}
where $\mathcal{C}$ is the circle of radius $\frac{|\zeta-x_{2k-1}|}{2}$ centered at $\zeta.$ Hence, by (\ref{hbound}), (\ref{cauchy_diff}) and the residual theorem, we have 
\begin{equation}\label{hderivetivebound}
|h^{\prime}_{jj}(\zeta)| \leq C | \zeta-x_{2k-1}|^{-1/2}n^{-1/2+\epsilon}.
\end{equation} 
When $i=j,$ by  the residual theorem and (\ref{hderivetivebound}), we have 
\begin{equation*}
|s_2|=\left| \frac{d_i^2}{\tsig_i^2} h_{jj}^{\prime}(-\tsig_i) \right| \leq C \frac{d_i^2}{\tsig_i^2}(-\tsig_i^{-1}-x_{2k-1})^{-1/2}n^{-1/2+\epsilon}. 
\end{equation*}
When $i \neq  j,$ by Assumption \ref{assum_stronger} and residual theorem, we have  $|s_2|=0.$ Finally, we estimate $s_3.$ Here the residual calculation is not available, we need to choose precise contour for our discussion. From the definition of $s_3,$  (\ref{eq_evebound1}) and (\ref{fderivative}), by residual theorem, we find that for some constant $C>0,$
\begin{equation*}
|s_3| \leq C \oint_{\gamma_i} n^{-1+2\epsilon} |(\zeta+\tsig_j^{-1})|^{-2} \norm{(D^{-1}_o+1+f(\zeta)\mathbf{V}_o^\T \CG_1(f(\zeta)) \mathbf{V}_o)^{-1}} d|\zeta|.  
\end{equation*}
Using the resolvent  identity, we find that on $\gamma_i,$
\begin{align*}
\norm{(D^{-1}_o+1+f(\zeta)\mathbf{V}_o^\T \CG_1(f(\zeta)) \mathbf{V}_o)^{-1}}  \leq \frac{1}{ \min_t |d_t^{-1}+1+(1+\zeta \sigma_t)^{-1}|-\norm{\Delta(f(\zeta))}}.
\end{align*}
When $\zeta \in \gamma_i,$ we have that for some constant $\varsigma>0,$  
\begin{equation*}
|d_t^{-1}+1+(1+\zeta\sigma_t)^{-1}| \geq \varsigma |\zeta+\tsig_{t}^{-1}| \geq \varsigma |\zeta+\tsig_i^{-1}|=\varsigma \rho_i \geq \varsigma (-\tsig_{i}^{-1}-x_{2k-1})^{-1/2}n^{-1/2+\delta},
\end{equation*}
where in the last step we use (\ref{rho_bound}). Together with (\ref{eq_evebound1}) and the fact $\epsilon<\delta$, we find that  
\begin{equation}\label{eq_keyestimate1}
\norm{(D^{-1}_o+1+f(\zeta)\mathbf{V}_o^\T \CG_1(f(\zeta)) \mathbf{V}_o)^{-1}}  \leq C \rho^{-1}_i. 
\end{equation}
As a consequence, we find that
\begin{equation*}
|s_3| \leq C n^{-1+2\epsilon} \sup_{\zeta \in \gamma_i}  |(\zeta+\tsig_j^{-1})|^{-2}. 
\end{equation*}
Using a discussion similar to \cite[Lemma 5.6]{BKYY}, we find that 
\begin{equation*}
|\zeta+\tsig_j^{-1}| \sim \rho_i+|\tsig_j^{-1}-\tsig_i^{-1}|.
\end{equation*}
We hence conclude from the definition of $\rho_{i} \equiv \rho_{k,i}$ that 
\begin{equation*}
|s_3| \leq C n^{-1+2\epsilon} (\nu^k_{ij})^{-2},
\end{equation*}
where we use the definition of (\ref{defn_nuoij}).
This concludes our proof.

\end{proof}

Secondly, with some extra technical work, we can show that the above results still hold true without the non-overlapping condition.  We record them as the following proposition without proofs. For more details, we refer to \cite[Section 5.2]{BKYY}. 
\begin{prop}\label{prop_vectornonover}
For fixed constant $\tau>0,$ suppose the assumptions of Theorem \ref{thm_eveout} hold true, then {Theorem \ref{thm_eveout}} holds for $k=1,2,\cdots,q, i,j \in \mathcal{O}_k^{\tau}. $
\end{prop}

To complete the proof of Theorem \ref{thm_eveout}, we need to prove the results for $\tau=0.$ It will rely on some discussion from the non-outlier eigenvectors.  We will finish the proof after proving Theorem \ref{thm_evebulk}.

\subsection{Proof of Theorem \ref{thm_evebulk}}
In this section, we discuss the non-outlier eigenvectors and prove Theorem \ref{thm_evebulk}. We will also establish a result which will be used for completing the proof of Theorem \ref{thm_eveout}.  We basically follow the discussion of \cite[Proposition 6.1]{BKYY} and only sketch the proof here. 
\begin{proof}
We focus our discussion on the $k$th bulk component and use the following shorthand notations
\begin{equation*}
\mu_i \equiv \mu_{k,i}, \  \ub_{i} \equiv \ub_{k,i}, \ \theta_i \equiv \theta_{k,i}. 
\end{equation*} 
We first suppose that $j \in \mathcal{I}.$ Let $\epsilon>0$ and set $\omega:=\epsilon/2.$   By (\ref{propm_equ1}),  Lemma \ref{lem_anisoout}, Theorem \ref{thm_evout} and Lemma \ref{lem_rigidedgeuni}, there exists a high probability event $\Xi_2 \equiv \Xi_2(\epsilon, \tau,k)$ satisfying the following conditions. \\
(i). For $z \in \mathbf{S}$ defined in (\ref{full}), we have {
\begin{equation}\label{eq_bound1}
\mathbf{1}(\Xi_2) \norm{\Vb_o^\T\Big (x \CG_1(x)-\frac{-1}{1+m(x) \Sigma} \Big ) \Vb_o} \leq n^{\epsilon} \Psi(z).
\end{equation}
}
(ii). We also have  
\begin{equation*}
\mathbf{1}(\Xi_2)|\mu_{k, r^++1}-x_{2k-1}| \leq n^{-2/3+\epsilon},\ \mathbf{1}(\Xi_2)|\lambda_{k,i}-\gamma_{k,i}| \leq   i^{-1/3} n^{-2/3+\epsilon}, \  i \leq (1-\tau) n_k. 
\end{equation*}
(iii). We further have that
\begin{equation*}
\mathbf{1}(\Xi_2) |\mu_{k,i}-f(-\tsig_{k,i}^{-1})| \leq (-\tsig_{k,i}^{-1}-x_{2k-1})^{1/2} n^{-1/2+\epsilon}, 
\end{equation*}
for $\tsig_{k,i}$ satisfying 
\begin{equation*}
x_{2k-1}+n^{-1/2} \leq -\tsig_{k,i}^{-1} \leq x_{2k-1}+n^{-1/2+\tau}. 
\end{equation*}
For the following we fix a realization $\mathcal{Q}_1 \in \Xi_2$ and focus on the high probability event $\Xi_2.$ 
We set the spectral parameter $z=\mu_i+\ri \eta,$ where $\eta$ is the unique smallest solution of 
\begin{equation}\label{defn_eta1}
\operatorname{Im} m(z)=n^{-1+6\epsilon}\eta^{-1}.
\end{equation}
Hence, (\ref{eq_bound1}) reads as 
{
\begin{equation}\label{anisotropic_nonout}
\mathbf{1}(\Xi_2) \norm{\Vb_o^\T\Big (x \CG_1(x)-\frac{-1}{1+m(x) \Sigma} \Big ) \Vb_o} \leq  n^{2\epsilon} (n \eta)^{-1}. 
\end{equation}
}
Denote $\kappa \equiv \kappa_k(\mu_i),$ we find from (\ref{propm_equ1}) that 
\begin{equation}\label{eq_etaestimate}
\eta \asymp
\begin{cases}
\frac{n^{6 \epsilon}}{n \sqrt{k}+n^{2/3+2\epsilon}}, & \ \text{if} \ \mu_i \leq x_{2k-1}+n^{-2/3+4\epsilon}, \\
n^{-1/2+3\epsilon} \kappa^{1/4}, & \ \text{if} \ \mu_i \geq x_{2k-1}+n^{-2/3+4\epsilon}.
\end{cases}
\end{equation}
Now we start our estimation.  From the spectral decomposition, we conclude that 
\begin{equation}\label{spec_out}
\langle \mathbf{u}_i, \mathbf{v}_{j} \rangle^2 \leq \eta \mathbf{v}_j^\T \operatorname{Im} \CTG_1(z) \mathbf{v}_j.
\end{equation} 
Since $j \in \mathcal{I},$ by Lemma \ref{lem_vector} and the resolvent identity  for (\ref{eq_resolvent}), we have 
{\small
\begin{align*} \label{nonoutlarge}
&z \langle \mathbf{v}_j, \CTG_1(z) \mathbf{v}_j \rangle   \\
& = d_j^{-1}-(1+d_j)d_j^{-2} \left((d_j^{-1}+1-(1+m(z)\sigma_j)^{-1})^{-1}+(d_i^{-1}+1-(1+m(z)\sigma_j)^{-1})^{-2} \left( \Delta(z) \right)_{ii}  \right.  \nonumber \\
&  \left. +  \left((\mathbf{D}_o^{-1}+1-(1+m(z)\Lambda_o)^{-1})^{-1}\Delta(z) (\mathbf{D}_o^{-1}+1-(1+m(z)\Lambda_o)^{-1})^{-1}\Delta(z) \frac{1}{\mathbf{D}_o^{-1}+1+z\mathbf{V}_o^\T \CG_1(z) \mathbf{V}_o} \right )_{ii} \right) . \nonumber
\end{align*} 
}
First of all, we estimate the error item containing two $\Delta(z)'s. $ Using a discussion similar to (\ref{eq_keyestimate1}),  by (\ref{anisotropic_nonout}) and (\ref{eq_etaestimate}), we have that 
\begin{equation*}
\left| \left| \frac{1}{\mathbf{D}^{-1}_o+1+z\mathbf{V}_o^\T \CG_1(z) \mathbf{V}_o} \right| \right| \leq \frac{2 }{\operatorname{Im}m(z)}=2n^{1-6 \epsilon} \eta.
\end{equation*}
As a consequence, we have 
\begin{equation*}
z \langle \mathbf{v}_j, \CTG_1(z) \mathbf{v}_j \rangle= -(1+m(z) \tsig_j)^{-1}+O(n^{2 \epsilon} |1+m(z) \tsig_j|^{-2} (n \eta)^{-1}).  
\end{equation*}
%
%
Hence, together with (\ref{spec_out}), for some constant $C>0,$ we have 
\begin{align} \label{bounform}
\langle \mathbf{u}_i,  \mathbf{v}_j \rangle^2  & \leq - \eta \text{Im} (z^{-1}(1+m(z) \tsig_j)^{-1})+C n^{2 \epsilon} (n |z|)^{-1}  |1+m(z) \tsig_j|^{-2} \nonumber \\
& =   -\eta^2 |z|^{-2} \text{Re} ((1+m(z) \tsig_j)^{-1})-\eta |z|^{-2} \mu_i \text{Im}  ((1+m(z) \tsig_j)^{-1}) \nonumber \\
&+ C n^{2 \epsilon} (n|z|)^{-1}|1+m(z) \tsig_j|^{-2}. 
\end{align}

The estimate will reply on the following lemma, whose proof is similar to \cite[eq. (6.10)]{BKYY} and we omit the details here. 
\begin{lem}\label{lem_denominatorbound}
For any fixed $\delta \in [0, 1/3-\epsilon),$ we have the lower bound 
\begin{equation*}
|1+\tsig_j m(z)| \geq \varsigma (n^{-2 \delta}|-\tsig_j^{-1}-x_{2k-1}|+\operatorname{Im}\ m(z)),
\end{equation*}
for $\mu_i \in [0, f(x_{2k-1}+n^{-1/3+\delta+\epsilon})]$ and some constant $\varsigma>0.$
\end{lem}

We now estimate the items in (\ref{bounform}) one by one. Since $z$ is bounded by Lemma \ref{lem_rigi_equ},  for some constant $C>0,$ we have that 
\begin{align*}
-\eta^2 |z|^{-2} \text{Re} ((1+m(z) \tsig_j)^{-1})  \leq  \frac{C \eta^2}{|1+m(z) \tsig_j|} \leq C \frac{ \eta^2}{\text{Im} \ m(z)}=C\eta^3 n^{1-6\epsilon} ,
\end{align*}
where in the second inequality we use Lemma \ref{lem_denominatorbound}. It is easy to see that $\eta \leq n^{-2/3+4 \epsilon+\delta}$ using (\ref{fderivative}) and (\ref{eq_etaestimate}), we hence conclude that 
\begin{equation*}
-\eta^2 |z|^{-2} \text{Re} ((1+m(z) \tsig_j)^{-1})  \leq C n^{-1+6 \epsilon+3 \delta}. 
\end{equation*}
Next, we find that 
\begin{align*}
-\eta |z|^{-2} \mu_i \text{Im}  ((1+m(z) \tsig_j)^{-1})  \leq C \frac{ \eta \text{Im} \ m(z)}{|1+m(z) \tsig_j|^2} \leq \frac{C n^{6 \epsilon}}{n |1+m(z) \tsig_j|^2}. 
\end{align*}
Finally, we can estimate 
\begin{equation*}
C n^{2 \epsilon} (n|z|)^{-1}|1+m(z) \tsig_j|^{-2} \leq C \frac{n^{2 \epsilon}}{n|1+m(z) \tsig_j|^2 }.
\end{equation*}
Putting all these estimates together, we conclude that 
\begin{equation*}
\langle \mathbf{u}_i,  \mathbf{v}_j \rangle^2  \leq C (n^{-1+6 \epsilon+3 \delta}+n^{-1+6 \epsilon}|1+m(z) \tsig_j|^{-2}). 
\end{equation*}
Finally, we will use Lemma \ref{lem_denominatorbound} to estimate the right-hand side of the above equation.  For the $k$th bulk component when $i \geq r_k^++1,$ by (\ref{propm_equ1}), Condition (ii) of $\Xi_2$  and (\ref{eq_etaestimate}), for some constant $\varsigma>0,$ we have
\begin{equation*}
\text{Im} \ m(z) \geq \varsigma \sqrt{\theta_i}. 
\end{equation*}
By choosing $\delta=0$ in Lemma \ref{lem_denominatorbound}, we can complete the proof for $j \in \mathcal{I}$.  The general case follows from a limiting argument. We denote $\widehat{\mathcal{I}}:=\mathcal{I} \cup \{j\}$ and consider 
\begin{equation}\label{eq_modifiedmodel}
\widehat{\widetilde{\Sigma}}= \Sigma(1+\widehat{\Vb}_o \widehat{D}_{o} \widehat{\Vb}^\T_{o}), \ \widehat{\Vb}_{o}:=(\vb_k)_{k \in \widehat{\mathcal{I}}}, \ \widehat{D}_{o}:=\text{diag}\{\widehat{d}_k\}_{k \in \widehat{\mathcal{I}}},
\end{equation}
where $\widehat{d}_k:=d_k$ for $k \in \mathcal{I}$ and $\widehat{d}_j \in (0,1/2).$ Since $|\widehat{\mathcal{R}}| \leq r+1$ and $\widehat{D}_o$ is invertible, we can apply the above results for $j \in \mathcal{I}$ to the modified model (\ref{eq_modifiedmodel}). Now by taking the limit $\widehat{d}_j \rightarrow 0$ we can conclude the proof in the general case. 

\end{proof}

\begin{rem}
Note that when $i \leq r_k^+$ satisfies that $-\tsig_{k,i}^{-1} \leq x_{2k-1}+n^{-1/3+\tau},$ we need to choose $\delta=\tau$ for Lemma \ref{lem_denominatorbound} since in which case 
\begin{equation*}
\text{Im} \ m(z) \geq \varsigma \sqrt{\eta}.
\end{equation*} 
This leads to the estimate 
\begin{equation}\label{eq_morestiate}
\langle \mathbf{u}_i,  \mathbf{v}_j \rangle^2  \prec C n^{-1+3 \tau}(|-\tsig_j^{-1}-x_{2k-1}|^2+\theta_i)^{-1}. 
\end{equation}
The above estimate will be used to complete the proof of Theorem \ref{thm_eveout}. 
\end{rem}

\subsection{Proof of Theorem \ref{thm_eveout}} In this section, we prove Theorem \ref{thm_eveout} by allowing $\tau=0$ in $\mathcal{O}_k^{\tau},$ whereas the case $\tau>0$ has been proved in Section \ref{sec_spikedvector}.  
\begin{proof}
Fix $\epsilon>0.$ Note that it is easy to check by contradiction that there exists some $s \in [r_k]$ satisfying the gap condition: for all $t$ such that $-\tsig_t^{-1}>x_{2k-1}+sn^{-1/3+\epsilon}$  we have $-\tsig_t^{-1} \geq x_{2k-1}+(s+1)n^{-1/3+\epsilon}.$ For such $s,$ we can decompose $\mathcal{O}_k=\mathcal{O}_{k0} \cup \mathcal{O}_{k1}$ such that $-\tsig_t^{-1} \leq x_{2k-1}+sn^{-1/3+\epsilon}$ for $t \in \mathcal{O}_{k0} \cap \mathcal{O}$ and $-\tsig_t^{-1} \geq x_{2k-1}+(s+1)n^{-1/3+\epsilon}$    
for $t \in \mathcal{O}_{k1} \cap \mathcal{O}.$ It suffices to consider the case when $i \in \mathcal{O}_{k0}$ since the case $\mathcal{O}_{k1}$ follows from Proposition \ref{prop_vectornonover}. We first consider the case when $j \in \mathcal{O}_{k0}.$ Since  $-\tsig_i^{-1}-x_{2k-1} \leq s n^{-1/3+\epsilon},$ we find from (\ref{fderivative}) that 
\begin{equation*}
a_{k,i} \sim (-\tsig_i^{-1}-x_{2k-1}) \leq s n^{-1/3+\epsilon}, \ \nu_{ij}^k \leq C n^{-1/3+\epsilon},
\end{equation*} 
for some constant $C>0.$ Setting $\tau=\epsilon$ in (\ref{eq_morestiate}), we find that (\ref{eveout_eq}) holds with an extra $n^{3 \epsilon}$ times the right-hand side of (\ref{eveout_eq}). The proof $j \in \mathcal{O}_{k1}$ is similar.  Finally, the proof of (\ref{eveout_non}) follows from a similar discussion and the assumption  (\ref{defnregularityequation1}). We omit the details here. 
\end{proof}

\noindent \section*{Acknowledgements}
I would like to thank Jeremy Quastel and Balint Virag for fruitful discussions and valuable suggestions.  I also want to thank Weihao Kong for the discussion of some statistical applications and motivations. Finally, the author is grateful to two anonymous referees and the editor for their important comments and suggestions, which have significantly improved  the paper.

\begin{appendix}

\end{appendix}

\end{document}